% 2008-12-23 by R.A.B.
% 2008-12-18 by A.M.
% 2008-12-17 by A.M.
% 2008-12-16 by R.A.B.
% 2008-12-15 by R.A.B.
% 2008-06-18 by A.M.
% 2008-04-01 by R.A.B.
% 2008-03-31 by R.A.B.
% 2008-03-30 by A.M.
% 2008-03-28 by R.A.B.
% 2008-03-26 by R.A.B.
% 2008-03-25 by A.M.
% 2008-03-20 by R.A.B.
% 2008-03-15 by R.A.B.
% 2008-03-09 by R.A.B.
% 2008-03-06 by A.M.
% 2008-03-04 by R.A.B.
% 2008-02-29 by R.A.B.
% 2008-02-26 by R.A.B.
% 2008-02-21 by R.A.B.
% 2008-02-19 by R.A.B.
% 2008-02-09 by R.A.B.
% 2008-02-05 by R.A.B.
% 2008-01-27 by R.A.B.
% 2008-01-24 by R.A.B.
% 2008-01-17 by R.A.B.
% 2008-01-04 by R.A.B.
% 2008-01-02 by A.M.
% 2008-01-02 by R.A.B.
% 2007-12-31 by R.A.B.
% 2007-12-13 by R.A.B.
% 2007-12-04 by R.A.B.
% 2007-11-22 by R.A.B.
% 2007-11-21 by R.A.B.
% 2007-11-04 by R.A.B.
%\documentclass{amc}
\documentclass[12pt]{article}
\usepackage{amssymb,amsthm,latexsym}
\usepackage{amsmath}

\newtheorem{thm}{Theorem}[section]
\newtheorem{cor}{Corollary}
\newtheorem{lem}[thm]{Lemma}

\theoremstyle{definition}

\newcommand{\allone}{\mathbf{1}}
\newcommand{\ZZ}{\mathbf{Z}}
\newcommand{\ccode}{\mathcal{C}}
\DeclareMathOperator{\ch}{char}

\DeclareMathOperator{\kernel}{Ker}
\DeclareMathOperator{\diag}{diag}
\DeclareMathOperator{\Diag}{Diag}
\DeclareMathOperator{\Sym}{Sym}
\DeclareMathOperator{\Alt}{Alt}

\DeclareMathOperator{\rank}{rank}
\DeclareMathOperator{\wt}{wt}
\DeclareMathOperator{\Aut}{Aut}

%%%%%%%%%%%%%%%%%%%%%%%%%%%%%%%%%%%%%%%%%%%%%%%%%%%%%%%%%%%%%%%%%%%%%%%%%%%%%%%%%%%%%%%
%%%%%%%%%%%%%%%%%%%%%%%%%%%%%%%%%%%%%%%%%%%%%%%%%%%%%%%%%%%%%%%%%%%%%%%%%%%%%%%%%%%%%%%

%% Place the the running title of the paper with 40 letters or less in []
 %% and the title of the paper in { }.
%\title[Mass Formula for Self-Orthogonal Codes]
%{Mass Formula for Self-Orthogonal Codes over $\mathbf{Z}_{p^2}$}
\title{Mass Formula for Self-Orthogonal Codes over $\mathbf{Z}_{p^2}$}

%% Place all authors' names in [ ] shown as running head;
 %% No more than 40 letters. Leave { } empty
%\author[Rowena Alma L. Betty and Akihiro Munemasa]{}
%\author{Rowena Alma L. Betty and Akihiro Munemasa}

%% It is required to enter MSC and Keywords.
%%\subjclass{Primary: 94B05; Secondary: 53C35}
%\subjclass{Primary: 94B05; Secondary: 94B75}
%\keywords{self-dual code, self-orthogonal code}

%% Email address is required.
%\email{ralbetty@gmail.com; munemasa@math.is.tohoku.ac.jp}

%\thanks{The first author is affiliated with AIMS}

\date{December 23, 2008}

\begin{document}
\maketitle

\begin{center}
Dedicated to Professor D. K. Ray-Chaudhuri on the occasion of
his 75th birthday
\end{center}

%% Enter the first author's name and address:
\centerline{\scshape Rowena Alma L. Betty
\footnote{On study leave from the Institute of Mathematics, 
University of the Philippines-Diliman, Quezon City 1101 Philippines}}
\medskip
{\footnotesize 
\centerline{Graduate School of Information Sciences}
\centerline{Tohoku University}
\centerline{Sendai 980--8579, Japan}
} %% Do not forget to end the {\footnotesize by the sign }

\medskip

\centerline{\scshape Akihiro Munemasa}
\medskip
{\footnotesize 
\centerline{Graduate School of Information Sciences}
\centerline{Tohoku University}
\centerline{Sendai 980--8579, Japan}
} %% Do not forget to end the {\footnotesize by the sign }

\medskip

%% The name of the associate editor will be entered by a editorial staff
% \centerline{(Communicated by Iwan Duursma)}
% \medskip

\begin{abstract}
In this note, we establish a mass formula for self-orthogonal
codes over $\ZZ_{p^2}$, where $p$ is a prime. As a consequence, an alternative proof of the known mass
formulas for self-dual codes over $\ZZ_{p^2}$ is obtained. We also establish a mass formula for even quaternary codes, 
which includes a mass formula for Type II quaternary codes as a special case.
\end{abstract}

%\maketitle

%%%%%%%%%%%%%%%%%%%%%%%%%%%%%%
\section{Introduction}\label{Sec:Intro}
In this paper, we establish a mass formula for self-orthogonal
codes over $\ZZ_{p^2}$, where $p$ is a prime. This means finding a number $M(n)$ such that
$$M(n)=\sum_{\ccode}\frac{|E|}{|\Aut\ccode|}$$ where $\ccode$ runs through the equivalence classes of self-orthogonal codes of length $n$ over $\ZZ_{p^2}$, $E$ is the full group
of all transformations that we allow in defining equivalence
for code $\ccode$, and $\Aut\ccode$ is the automorphism group of $\ccode$.
Clearly, the mass formula gives the total number of distinct self-orthogonal codes.
Mass formulas for quaternary self-dual and Type II codes were
given in \cite{Gaborit}, while for odd primes $p$, 
a mass formula for self-dual codes over $\ZZ_{p^2}$ was 
given in \cite{Z9,Zp2}. Mass formulas for self-dual codes 
over a variety of rings, taken from different references, 
can also be found in \cite{Rains-Sloane}.   

Unlike mass formulas for codes over finite fields, our mass
formula for self-orthogonal codes over $\ZZ_{p^2}$ is given
as the sum of mass formulas over some finer classes of codes.
This is because codes over $\ZZ_{p^2}$ have an invariant called
a type, denoted $\{k_1, k_2\}$, in place of the dimension
for codes over finite fields. The type of a code over $\ZZ_{p^2}$
is determined by the dimensions of its residue and torsion. We
shall determine the number of self-orthogonal codes over
$\ZZ_{p^2}$ with given residue and torsion. This number is shown
to be a power of the prime $p$, as the set of such codes has a
structure of an affine subspace of the space of matrices over
$\ZZ_p$. Moreover, this number depends only on the dimensions of
the residue and torsion, and is independent of particular 
choices of a residue and torsion. When $p=2$, there is a subclass
of the class of 
self-orthogonal codes over $\ZZ_4$ which we call the class of
even codes. When an even code over $\ZZ_4$ is self-dual, then it
is called type~II, which is commonly used terminology. In the
literature, some authors require that type~II codes over $\ZZ_4$
to contain the all-ones vector, while others do not. We shall
establish mass formulas for type~II codes for both variants.

The paper is organized as follows. Section~2 introduces necessary
terminology. In Section~3, we consider some mappings on the space
of matrices. These mappings are used in Section~4 to describe the
set of generator matrices of self-orthogonal codes of given
residue and torsion. In Section~5, we establish results analogous
to those in Section~4, for quaternary even codes. Finally in
Section~6, we give a mass formula for self-orthogonal codes of
given type, and as a corollary, we also give a mass formula
for self-dual codes. Analogous results for quaternary
even codes are also given.

%%%%%%%%%%%
\section{Preliminaries}
\label{Sec:Pre}
For a positive integer $m$, we denote by $\ZZ_m$ the ring of
integers modulo $m$.
A code $\ccode$ of length $n$
over $\ZZ_m$ is a
submodule of $\ZZ_{m}^n$. 
%A matrix $G\in M_{k\times n}(\ZZ_m)$ 
%whose rows generate the code $\ccode$ is 
%a generator matrix of $\ccode$. 
For a matrix $G\in M_{k\times n}(\ZZ)$, we denote by
$\ZZ_{m}^k G$ the code 
$\{aG\bmod{m} \mid a\in \ZZ^k\}$ of length $n$ over $\ZZ_m$.
A generator matrix of a code $\ccode$ of length $n$ over
$\ZZ_m$ is a matrix $G\in M_{k\times n}(\ZZ)$ such that
$\ccode=\ZZ_{m}^k G$. Usually, entries of a generator matrix
of a code over $\ZZ_m$ are taken to be in $\ZZ_m$. 
However, since we deal with codes over $\ZZ_p$ and $\ZZ_{p^2}$ at the same time, we adopt
this non-standard convention to avoid cumbersome notation.

We denote by $x\cdot y$ the standard inner product
of vectors $x,y$ in $\ZZ_{m}^{n}$, and
by $\ccode^\perp$ the dual code of a code
$\ccode$ over $\ZZ_m$ with respect to
this inner product. A code $\ccode$ is said to be
self-orthogonal (respectively self-dual) if $\ccode\subset
\ccode^\perp$ (respectively $\ccode=\ccode^\perp$) holds.
Two codes $\ccode,\ccode'$ of the same length
are said to be equivalent if there exists a $(1,-1)$
monomial matrix which maps $\ccode$ onto $\ccode'$. 

Let $p$ be a prime, and consider the exact sequence
\begin{equation*}
0\to\ZZ_{p}\stackrel{\iota}{\to}
\ZZ_{p^2}\stackrel{\pi}{\to}
\ZZ_{p}\to0,
\end{equation*}
where $\iota$ is the composition of the isomorphism
$\ZZ_p\to p\ZZ_{p^2}$ and the embedding
$p\ZZ_{p^2}\to\ZZ_{p^2}$, and $\pi$ is the
canonical homomorphism. 
For a positive integer $n$, by abuse of notation, we denote
the cartesian product of the mappings $\iota$ and $\pi$
by the same symbols:
\begin{equation*}
0\to\ZZ_{p}^n\stackrel{\iota}{\to}
\ZZ_{p^2}^n\stackrel{\pi}{\to}
\ZZ_{p}^n\to0\quad\text{(exact).}
\end{equation*}
%Moreover, we denote by $\iota(A)$ the entrywise application
%of $\iota$ on a matrix $A$ over $\ZZ_p$, and also
%use this convention for the mapping $\pi$. 
For a code $\ccode$ over $\ZZ_{p^2}$,
$\pi(\ccode)$ is called the residue code
of $\ccode$, and $\iota^{-1}(\ccode)$ is called the
torsion code of $\ccode$. 
Since $\iota\circ\pi=p$, we have $\pi(\ccode)\subset\iota^{-1}(\ccode)$. 
Moreover, since $\mbox{Im}\;\iota = \kernel\pi$, we have  
\begin{equation}\label{cardC}
|\ccode|=|\pi(\ccode)||\iota^{-1}(\ccode)|. 
\end{equation}
Every code of length $n$ over $\ZZ_{p^2}$ is equivalent
to a code $\ccode$ with generator matrix
\[
\begin{bmatrix} 
%I&A\\ 0&\iota(B)
I_{k_1}&A\\ 0&pB
\end{bmatrix}
\]
where 
$A\in M_{k_1\times (n-k_1)}(\ZZ)$, 
$B\in M_{k_2\times (n-k_1)}(\ZZ)$.
%$A$ is a matrix with entries in $\ZZ_{p^2}$, 
%$B$ is a matrix with entries in $\ZZ_{p}$,
Note that
\[
\begin{bmatrix} 
%I&\pi(A)
I_{k_1}&A
\end{bmatrix}
\]
is a generator matrix of the residue code of $\ccode$,
and
\[
\begin{bmatrix} 
%I&\pi(A)\\ 0&B
I_{k_1}&A\\ 0&B
\end{bmatrix}
\]
is a generator matrix of the torsion code of $\ccode$. We say that the code $\ccode$ has type $\{k_1, k_2\}$.

The following lemma is due to Conway and 
Sloane~\cite[p. 34]{C-S-Z4}.

\begin{lem}\label{DEres}
Let $\ccode$ be 
%If $\ccode$ is 
a self-orthogonal code of length $n$ over 
$\ZZ_4$. Then $\pi(\ccode)$ is doubly even and
$\pi(\ccode)\subset\iota^{-1}(\ccode)\subset\pi(\ccode)^\perp$
holds.
%then its residue code is doubly even.
\end{lem}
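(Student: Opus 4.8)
The plan is to work directly with vectors and the standard inner product over $\ZZ_4$, exploiting the fact that squares in $\ZZ_4$ take only the values $0$ and $1$. First I would recall the key numerical identities: for $x\in\ZZ_4$, $x^2\equiv0\pmod4$ if $x$ is even and $x^2\equiv1\pmod4$ if $x$ is odd; consequently, for a vector $v\in\ZZ_4^n$, $v\cdot v\equiv(\text{number of odd coordinates of }v)\pmod4$, i.e. $v\cdot v\equiv\wt(\pi(v))\pmod4$ where the weight is that of the reduction mod $2$. Also, for $v\in\ZZ_4^n$, the reduction $2v$ depends only on $\pi(v)$, and $2v\cdot w\equiv 0\pmod4$ whenever $w$ has all coordinates even.

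The three assertions then follow in order. For $\pi(\ccode)$ doubly even: take $c\in\ccode$; self-orthogonality gives $c\cdot c\equiv0\pmod4$, so by the identity above $\wt(\pi(c))\equiv0\pmod4$, which says every codeword of $\pi(\ccode)$ has weight divisible by $4$, i.e. $\pi(\ccode)$ is doubly even. The inclusion $\pi(\ccode)\subset\iota^{-1}(\ccode)$ is already noted in the preliminaries (it comes from $\iota\circ\pi=p=2$ on $\ZZ_4$). For $\iota^{-1}(\ccode)\subset\pi(\ccode)^\perp$: let $\bar u\in\iota^{-1}(\ccode)$, so $\iota(\bar u)=2u\in\ccode$ for some lift $u$, and let $\bar v\in\pi(\ccode)$, so there is $c\in\ccode$ with $\pi(c)=\bar v$. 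Self-orthogonality of $\ccode$ gives $2u\cdot c\equiv0\pmod4$; but $2u\cdot c\equiv 2(\bar u\cdot\bar v)\pmod4$ since multiplication by $2$ factors through reduction mod $2$, hence $\bar u\cdot\bar v\equiv0\pmod2$, i.e. $\bar u\in\pi(\ccode)^\perp$ in $\ZZ_2^n$.

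The only point requiring a little care — and the step I would treat as the main obstacle — is the bookkeeping between the two rings in the identity $2u\cdot c\equiv 2(\bar u\cdot\bar v)\pmod4$: one must check that the value of $2(x\cdot y)\bmod 4$ depends only on $x\bmod2$ and $y\bmod 2$, which is immediate since $2((x+2x')\cdot y)=2(x\cdot y)+4(x'\cdot y)\equiv 2(x\cdot y)\pmod4$, and symmetrically in the second argument. Once this reduction lemma is in hand, the doubly-even claim and the two inclusions are each a one-line consequence of $\ccode\subset\ccode^\perp$. I would present the computation with the squares-in-$\ZZ_4$ observation stated explicitly up front, then dispatch the three claims in the order above.
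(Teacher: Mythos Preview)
Your proof is correct. The paper does not actually supply a proof of this lemma; it merely attributes the result to Conway and Sloane \cite[p.~34]{C-S-Z4} and states it without argument. Your elementary verification via $x^2\in\{0,1\}$ in $\ZZ_4$ for the doubly-even claim, the module identity $\iota\circ\pi=2$ for the first inclusion, and the reduction $2(x\cdot y)\bmod 4$ depending only on $x\bmod 2$, $y\bmod 2$ for the second inclusion, is the standard argument and is complete as written.
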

%\begin{proof}
%Let $c=(c_1 , \ldots , c_n) \in \ccode$. Since $\ccode$ is self-orthogonal, $\wt(c\bmod{2}) \equiv \sum_{i=1}^{n} c_{i}^{2} \equiv 0  \pmod{4}$. 
%\end{proof}

We define the Euclidean weight in $\ZZ_4$ by $\wt_e (0)=0$, $\wt_e(1)=\wt
_e(3)=1$ and $\wt_e(2)=4$. The Euclidean weight of a vector $x=(x_1,\ldots ,x_n) \in \ZZ_{4}^{n}$ is defined by
$$
\wt_e(x)=\sum_{i=1}^n \wt_e(x_i).
$$
A quaternary code is said to be even if
the Euclidean weight of every codeword is divisible by $8$.
Every quaternary even code is self-orthogonal.
A quaternary self-orthogonal code with generator matrix all of
whose row vectors have Euclidean weight divisible by $8$ is even,
by \cite[Lemma 2.2]{BDHO}. If a quaternary even code contains a codeword all
of whose coordinates are $\pm1$, then the length is divisible by $8$.
A quaternary even self-dual code is also called a
quaternary Type~II code. It is known (see \cite[Lemma 2.2]{HSG})
that a quaternary Type~II code
contains a codeword all of whose coordinates are $\pm1$, but in some earlier literature (see for example \cite{Z4-B-S-B-M}), a quaternary Type II code is assumed to contain the all-ones vector $\bf{1}$.
%\begin{lem}\label{SOTypeII} Let $\ccode$ be a self-orthogonal code over $\ZZ_4$ containing $\pm \textbf{1}$ with generating set $\{e_1, \ldots , e_k \}$. $\ccode$ is Type II if and only if $w_e (e_i)\equiv 0 \pmod{8}$ for all $i\in \{1,\ldots , k\}$
%\end{lem}
%\begin{proof}
%Obviously, when the code $\ccode$ is Type II, then the Euclidean weights of codewords are divisible by $8$. We only need to prove the opposite direction. Since $\ccode$ is self-orthogonal, we have
%$$
%e_i \cdot e_j = 0
%$$ 
%for all $i,j\in\{1,\ldots ,k\}$. From the definition of Euclidean weights over $\ZZ_4$, we have $w_e(x)\equiv x\cdot x \pmod{8}$ where $x\in\ZZ_4$ is considered as an element of $\ZZ_8$. Also we will be using the fact that $w_e(x+y)\equiv w_e(x) + w_e(y)+2x\cdot y \pmod{8}$ where $x,y\in\ZZ_4$ considered again as an element of $\ZZ_8$. Since the code $\ccode$ is self-orthogonal, this equation is just $w_e(x+y)\equiv w_e(x) + w_e(y)\pmod{8}$ so that $w_e(e_i +e_i)\equiv w_e(2e_i) \equiv 0 \pmod{8}$ and $w_e(e_i +e_j)\equiv 0 \pmod{8}$ for all $i,j\in\{1,\ldots ,k\}$. It follows by induction that the Euclidean weight of every codeword is divisible by $8$ .
%\end{proof}

\section{Some mappings on matrices}

Let $K$ be a field, $m$ a positive integer. We denote by 
$\Sym_{m}(K)$ the set of symmetric $m\times m$ 
matrices and $\Alt_{m}(K)$ the set of alternating 
$m\times m$ matrices over $K$. 
For a square matrix $A$ of order $n$, we denote by
$\diag(A)$ the $n$-dimensional vector composed by
the diagonal entries of $A$, and by $\Diag(A)$
the diagonal matrix whose diagonal entries are those
of $A$.
If the characteristic of $K$
is $2$, then
\[
\Alt_m(K)=\{ A\in\Sym_m(K)\mid \diag(A)=0\}.
\]

\begin{lem}\label{rankA} 
If $A\in M_{m\times n} (K)$ has rank $m$, then the 
image of the map 
$$\begin{array}{ccl}
\Psi_A  : M_{m\times n} (K) & \rightarrow & M_{m} (K) \\
N & \mapsto & AN^t + NA^t
\end{array}$$
is $\Sym_m(K)$  if $\ch K\neq 2$ and $\Alt_m(K)$ 
if $\ch K = 2$. Moreover, for $\ch K = 2$, the map 
$$\begin{array}{ccl}
\Phi_A  : M_{m\times n} (K) & \rightarrow & \Sym_m(K) \\
N & \mapsto & AN^t + NA^t + \Diag(AN^t)
\end{array}$$
is surjective.
\end{lem}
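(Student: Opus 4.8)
The plan is to analyze $\Psi_A$ and $\Phi_A$ as linear maps between finite-dimensional vector spaces over $K$ and compute images by a dimension count, using the hypothesis $\rank A = m$ to pin down the kernels. First I would observe that $\Psi_A$ does land in the claimed space: $(AN^t+NA^t)^t = AN^t+NA^t$, so the image lies in $\Sym_m(K)$ always, and when $\ch K = 2$ the diagonal of $AN^t+NA^t$ is $2\diag(AN^t)=0$, so the image lies in $\Alt_m(K)$; similarly $\Phi_A(N)^t = \Phi_A(N)$ and in characteristic $2$ the diagonal of $\Phi_A(N)$ is $\diag(AN^t)+\diag(AN^t)=0$... wait, that would put $\Phi_A$ into $\Alt_m(K)$, not onto $\Sym_m(K)$; so I should recompute: $\diag(AN^t + NA^t) = 2\diag(AN^t) = 0$ in characteristic $2$, hence $\diag(\Phi_A(N)) = \diag(AN^t)$, which is a free vector, so $\Phi_A$ really can surject onto $\Sym_m(K)$. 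Good — that is the role of the correction term $\Diag(AN^t)$.

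The heart of the argument is the computation of $\kernel\Psi_A$. Since $A$ has rank $m\le n$, by elementary column operations (multiplying on the right by an invertible $n\times n$ matrix, which is an automorphism of $M_{m\times n}(K)$ and does not change whether a map is surjective) I may assume $A = \begin{bmatrix} I_m & 0\end{bmatrix}$. Writing $N = \begin{bmatrix} N_1 & N_2\end{bmatrix}$ with $N_1\in M_m(K)$, one gets $\Psi_A(N) = N_1^t + N_1$ and $\Phi_A(N) = N_1^t + N_1 + \Diag(N_1)$; neither depends on $N_2$, which contributes $mn - m^2$ to the kernel dimension. For $\Psi_A$: the map $N_1\mapsto N_1+N_1^t$ on $M_m(K)$ has image $\Sym_m(K)$ when $\ch K\neq 2$ (kernel $\Alt_m(K)$, and $\dim M_m = \dim\Alt_m + \dim\Sym_m$ exactly) and image $\Alt_m(K)$ when $\ch K = 2$ (kernel the upper-triangular-plus-symmetric complement, of dimension $\binom{m+1}{2}$, image of dimension $\binom{m}{2} = \dim\Alt_m$). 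For $\Phi_A$ in characteristic $2$: $N_1\mapsto N_1+N_1^t+\Diag(N_1)$ sends the strictly-upper-triangular part to the strictly-upper and -lower off-diagonal entries (spanning $\Alt_m$) and the diagonal part isomorphically to the diagonal, so together the image is all of $\Sym_m(K)$; equivalently its kernel is exactly the strictly lower triangular matrices, of dimension $\binom{m}{2}$, giving image dimension $m^2 - \binom{m}{2} = \binom{m+1}{2} = \dim\Sym_m(K)$.

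Assembling, in each case the image has been shown to be contained in the target space and to have dimension equal to that of the target space, hence equals it; this proves all three assertions. The main thing to be careful about is the characteristic-$2$ bookkeeping with the diagonal: one must verify explicitly that $\{A\in\Sym_m(K)\mid\diag(A)=0\}=\Alt_m(K)$ has dimension $\binom{m}{2}$ while $\Sym_m(K)$ has dimension $\binom{m+1}{2}$, and that the correction term $\Diag(AN^t)$ restores exactly the missing $m$ diagonal degrees of freedom. I expect no genuine obstacle beyond this; the reduction to $A=\begin{bmatrix}I_m&0\end{bmatrix}$ makes everything a direct computation on $m\times m$ matrices.
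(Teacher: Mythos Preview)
Your argument is correct and follows essentially the same route as the paper: both reduce the question to computing $\{S+S^t\mid S\in M_m(K)\}$ and $\{S+S^t+\Diag(S)\mid S\in M_m(K)\}$. The paper reaches this reduction by observing directly that $N\mapsto AN^t$ is surjective onto $M_m(K)$ when $\rank A=m$, whereas you normalize $A$ to $\begin{bmatrix}I_m&0\end{bmatrix}$ by column operations and then do a kernel/dimension count; these are equivalent maneuvers. One small slip: in characteristic $2$ the kernel of $N_1\mapsto N_1+N_1^t+\Diag(N_1)$ is not the strictly lower triangular matrices (those are not sent to $0$) but rather $\Alt_m(K)=\{N_1\in\Sym_m(K)\mid\diag(N_1)=0\}$; fortunately this still has dimension $\binom{m}{2}$, and your preceding sentence already establishes surjectivity directly, so the conclusion stands.
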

\begin{proof}
%First, we show that $\{ AN^t | N \in M_{m \times n}(K) \} = M_m(K)$. 
Since $\rank A = m$, 
%$A(K^n) = K^m$. 
%Therefore, 
$A(M_{n\times m}(K))=M_{m}(K)$ holds.
Thus we have
\begin{align*}
\Psi_A(M_{m\times n}(K))
&= 
\left\{ AN^t + NA^t \mid N\in M_{m\times n} (K)\right\} 
\\ &= 
\left\{ S + S^t\mid S\in M_{m} (K)\right\}
\\ &=
\begin{cases}
\Sym_m(K)&\text{if $\ch K\neq 2$,}\\
\Alt_m(K)&\text{if $\ch K= 2$.}
\end{cases}
\end{align*}
For $\ch K = 2$,
\begin{align*}
\Sym_{m}(K) &=
\{ R + R^t + \Diag (R)\mid R \in M_{m}(K)\; \mbox{is upper triangular}\}
\\ &\subset
\{ S + S^t + \Diag (S)\mid S \in M_{m}(K)\} 
\\ &= 
\{ AN^t + NA^t + \Diag(AN^t)\mid N \in M_{m \times n}(K) \}
\\ &= \Phi_A \left( M_{m\times n} (K)\right)\subset \Sym_{m}(K). 
\end{align*}
Thus $\Phi_A\left( M_{m\times n} (K)\right)=\Sym_{m}(K)$. 
\end{proof}

\begin{lem}\label{sumofmap}
Let $\ch K=2$, $\rank A=m$ for $A\in M_{m\times n} (K)$ and the vector $\allone$ does not belong to the row space of $A$. Define the map 
$$\begin{array}{ccl}
\alpha  : M_{m\times n} (K) & \rightarrow & K^{m} \\
N & \mapsto & \allone N^t.
\end{array}$$
Then the map 
$$\begin{array}{cccl}
\Phi_A \oplus \alpha : & M_{m\times n} (K) & \rightarrow & \Sym_m(K)\oplus K^m \\
  & N & \mapsto & \left( AN^t + NA^t + \Diag(AN^t),\allone N^t \right)
\end{array}$$
is surjective.
\end{lem}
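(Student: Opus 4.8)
The plan is to split the problem according to the two summands of the codomain. Since $\Phi_A$ is already surjective onto $\Sym_m(K)$ by Lemma~\ref{rankA}, it suffices to prove that the restriction of $\alpha$ to $\kernel\Phi_A$ is surjective onto $K^m$. Indeed, both $\Phi_A$ and $\alpha$ are $K$-linear, so given $(S,v)\in\Sym_m(K)\oplus K^m$ one may first choose $N_0$ with $\Phi_A(N_0)=S$, and then, using surjectivity of $\alpha$ on $\kernel\Phi_A$, add to $N_0$ an element of $\kernel\Phi_A$ to adjust the second coordinate to $v$ without changing the first. Equivalently: the image of $\Phi_A\oplus\alpha$ is a subspace that surjects onto the first factor and contains $\{0\}\oplus K^m$, hence is everything.

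So the real content is the surjectivity of $\alpha$ on $\kernel\Phi_A$. Here I would not even need to identify $\kernel\Phi_A$ precisely; it is enough to work inside the smaller subspace $\{N\in M_{m\times n}(K):AN^t=0\}\subseteq\kernel\Phi_A$ (the inclusion holds since $AN^t=0$ forces $NA^t=0$ and $\Diag(AN^t)=0$, so $\Phi_A(N)=0$). The hypothesis that $\allone$ does not lie in the row space of $A$ translates, via the standard fact that the row space of $A$ is the orthogonal complement of $\kernel A$, into the existence of a vector $x$ with $Ax=0$ and $\sigma:=\allone\cdot x\neq0$. Given a target $v=(v_1,\dots,v_m)\in K^m$, I would then take $N$ to be the rank-one matrix whose $i$-th row is $\sigma^{-1}v_i x^t$ (that is, $N=\sigma^{-1}v^t x^t$ with $v$ regarded as a row vector). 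One checks at once that $AN^t=\sigma^{-1}(Ax)v=0$, so $N\in\kernel\Phi_A$, while $\allone N^t=\sigma^{-1}(\allone\cdot x)v=v$, so $\alpha(N)=v$; this yields the desired surjectivity, and the proof is complete.

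The main obstacle is really just to locate where the hypothesis enters: once one recognizes that $\allone\notin$ (row space of $A$) is exactly the assertion that $\kernel A$ contains a vector of nonzero coordinate sum, the construction of $N$ is forced and the verification is immediate. I should note that $\ch K=2$ plays no role in this argument beyond what is already packaged in Lemma~\ref{rankA}: because the matrix $N$ we produce satisfies $AN^t=0$, the term $\Diag(AN^t)$ that distinguishes $\Phi_A$ from $\Psi_A$ simply vanishes, so no characteristic-dependent subtlety arises in this second half of the proof.
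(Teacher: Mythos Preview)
Your proof is correct and follows essentially the same approach as the paper: reduce to showing that $\alpha$ is surjective on $\kernel\Phi_A$, then work inside the smaller subspace $\{N:AN^t=0\}$ and use the hypothesis that $\allone$ is not in the row space of $A$ to find a suitable preimage. The only difference is presentational---the paper phrases the key step abstractly as $\{\allone N^t : AN^t=0\}=K^m$, whereas you exhibit an explicit rank-one matrix $N=\sigma^{-1}v^t x^t$---but the underlying idea is identical.
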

\begin{proof}
Since $\Phi_A$ is surjective, it suffices
to show that the restriction of $\alpha$ to $\kernel\Phi_A$ is
surjective.
By the assumption, 
\[
\kernel A\supsetneq\kernel\begin{bmatrix} \allone\\ A\end{bmatrix}.
\]
This implies
\[
K=\{\allone x^t\mid x\in K^n,\; Ax^t=0\},
\]
and hence
\begin{align*}
\alpha(\kernel\Phi_A)
&\supseteq
\{\allone N^t\mid N\in M_{m\times n}(K),\;AN^t=0\}
\\ &=K^m.
\end{align*}
\end{proof}

\section{Codes over $\ZZ_{p^2}$ with prescribed residue and torsion} \label{sec4}

Throughout this section, we let $p$ be a prime, 
and $\ccode_1,\ccode_2$ codes of length $n$ over
$\ZZ_{p}$ such that $\ccode_1$ has generator matrix
\begin{equation}\label{C1}
\begin{bmatrix} 
I&A
\end{bmatrix},
\end{equation}
$\ccode_2$ has generator matrix
\begin{equation}\label{C2}
\begin{bmatrix} 
I&A\\ 0&B
\end{bmatrix},\end{equation}
$A\in M_{k_1\times (n-k_1)}(\ZZ)$, 
$B\in M_{k_2\times (n-k_1)}(\ZZ)$ 
and $\dim\ccode_1=k_1$, $\dim\ccode_2=k_1+k_2$. 
%We assume
%$I+AA^t \equiv 0\pmod{p}$ and 
%$AB^t \equiv 0\pmod{p}$, that is $\ccode_1\subset\ccode_2\subset\ccode_1^\perp$. When $p=2$, we require $\ccode_1$ to be doubly-even or equivalently, diag$(I+AA^t)\equiv 0 \pmod{4}$.

%If $R$ is an arbitrary commutative ring, 
%a submodule $T$ of an $R$-module $S$ is called a pure submodule if, for every $r\in R$, $rT=T\cap rS$ (see \cite{Curtis-Reiner-Book}). 
A code $\ccode$ of length $n$ over $\ZZ_{p^2}$ is free if 
$\ccode$ is isomorphic to the direct sum of copies of $\ZZ_{p^2}$, as a $\ZZ_{p^2}$-module, or equivalently,
$\pi(\ccode)=\iota^{-1}(\ccode)$. We first compute the number of free self-orthogonal codes with given residue, then compute the number of self-orthogonal codes with given residue and torsion.

\begin{lem}\label{genmatpure}
If $\ccode$ is a code of length $n$ over $\ZZ_{p^2}$ satisfying $\pi (\ccode) =\ccode_1$ and $\iota^{-1}(\ccode)=\ccode_2$,
then there exists a matrix 
$N\in M_{k_1\times (n-k_1)}(\ZZ)$ such that
\begin{equation}\label{Cgenmat}
\begin{bmatrix}
I&A+pN\\
0 & pB
\end{bmatrix}
\end{equation}
is a generator matrix of $\ccode$. Moreover, if $k_2 =0$, that is, $\pi (\ccode) =\iota^{-1}(\ccode)=\ccode_1$, such a
matrix $N$ is unique modulo $p$.
\end{lem}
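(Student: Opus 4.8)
The plan is to work directly with generator matrices, starting from the standard form and extracting the structure forced by the hypotheses $\pi(\ccode)=\ccode_1$ and $\iota^{-1}(\ccode)=\ccode_2$. First I would invoke the fact quoted in Section~2 that every code over $\ZZ_{p^2}$ is equivalent to one in the displayed block form; but here we cannot change coordinates, since $\ccode$, $\ccode_1$, $\ccode_2$ are given. Instead, I would argue that since $\pi(\ccode)=\ccode_1$ has generator matrix $\begin{bmatrix} I & A\end{bmatrix}$, there exist $k_1$ codewords of $\ccode$ whose images under $\pi$ are the rows of $\begin{bmatrix} I & A\end{bmatrix}$; writing these codewords as the rows of a $k_1\times n$ integer matrix, the leftmost $k_1$ columns reduce mod $p$ to $I_{k_1}$, so after adjusting by elements of $p\ZZ_{p^2}$ on the left we may take them to be $\begin{bmatrix} I_{k_1} & A+pN\end{bmatrix}$ for some integer matrix $N\in M_{k_1\times(n-k_1)}(\ZZ)$. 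Then $\iota^{-1}(\ccode)=\ker(\pi|_{\ccode})$ viewed appropriately, and since $\iota$ identifies $\ZZ_p^n$ with $p\ZZ_{p^2}^n$, the kernel part of $\ccode$ is $p$ times a code over $\ZZ_p$ whose generator matrix, together with the residue generators, must generate $\ccode_2$; this forces the second block of rows to be $\begin{bmatrix} 0 & pB'\end{bmatrix}$ where $\begin{bmatrix} I & A\\ 0 & B'\end{bmatrix}$ generates $\ccode_2$, and one checks $B'$ can be taken equal to $B$ (the rows of $B'$ differ from those of $B$ by rows in the span of $\begin{bmatrix} I & A\end{bmatrix}$, but since the first $k_1$ columns of $B'$ are already $0$, that span contribution is zero). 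Thus \eqref{Cgenmat} is a generator matrix of $\ccode$.

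For the uniqueness assertion when $k_2=0$: here $\ccode$ is free of rank $k_1$ with generator matrix $\begin{bmatrix} I_{k_1} & A+pN\end{bmatrix}$, and I must show $N$ is determined modulo $p$. Suppose $\begin{bmatrix} I_{k_1} & A+pN\end{bmatrix}$ and $\begin{bmatrix} I_{k_1} & A+pN'\end{bmatrix}$ both generate $\ccode$ over $\ZZ_{p^2}$. Since both have the same row space and the same leftmost block $I_{k_1}$, they are related by left multiplication by an invertible matrix over $\ZZ_{p^2}$; comparing the first $k_1$ columns, that invertible matrix must be $I_{k_1}$ itself (modulo $p^2$), hence $A+pN\equiv A+pN'\pmod{p^2}$, i.e.\ $pN\equiv pN'\pmod{p^2}$, i.e.\ $N\equiv N'\pmod p$. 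Conversely $N$ taken modulo $p$ does determine $\ccode$, so the correspondence is a bijection.

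The main obstacle I anticipate is the careful bookkeeping in the first part: making precise the claim that the kernel of $\pi$ restricted to $\ccode$ corresponds, under the identification $\iota\colon\ZZ_p^n\xrightarrow{\sim}p\ZZ_{p^2}^n$, to the torsion code $\ccode_2$, and that the row-reduction can be arranged so the torsion block appears with leading zeros matching the residue block. Concretely, one must verify that a lower block $\begin{bmatrix} C & pB'\end{bmatrix}$ of any generator matrix can have $C$ cleared to $0$ by subtracting integer combinations of the top rows $\begin{bmatrix} I & A+pN\end{bmatrix}$ without disturbing the rest modulo $p^2$ — this is routine but needs the observation that subtracting $p\cdot(\text{top rows})$ only changes things modulo $p^2$ in the last $n-k_1$ columns, and that reducing $C$ modulo $p$ to $0$ is exactly what is needed since $C\begin{bmatrix} I & A+pN\end{bmatrix}$ absorbed into the lower rows leaves the torsion code unchanged. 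Everything else is a direct translation between the module-theoretic description of residue/torsion and the matrix form, using \eqref{cardC} to check ranks are consistent.
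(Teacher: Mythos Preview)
Your approach is correct and essentially coincides with the paper's: lift the rows of $\begin{bmatrix} I & A\end{bmatrix}$ to $\ccode$, clear the $p$-part in the first $k_1$ columns by left-multiplying by $I-pM_1$ (your ``adjusting by elements of $p\ZZ_{p^2}$ on the left''), append the torsion rows, and then use the cardinality identity \eqref{cardC} to upgrade the containment to equality; the uniqueness argument for $k_2=0$ is likewise the same. The paper is a little more direct in one place: rather than introducing an auxiliary $B'$ and arguing it may be replaced by $B$, it simply notes that $\iota^{-1}(\ccode)=\ccode_2$ gives $\iota(\ccode_2)\subset\ccode$, so the rows of $\begin{bmatrix}0&pB\end{bmatrix}$ already lie in $\ccode$, and it makes the count $|\ccode|=|\ccode_1||\ccode_2|=p^{2k_1+k_2}$ explicit instead of leaving it as ``checking ranks are consistent.''
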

\begin{proof}
Since $\pi (\ccode ) = \ccode_1$ and $\iota^{-1}(\ccode)=\ccode_2$,
$$ \ccode \supset \ZZ_{p^2}^{k_1+k_2} \begin{bmatrix} I + p M_1 & A + p M_2 \\0 & pB\end{bmatrix}
$$ 
for some $M_1\in M_{k_1} (\ZZ)$, $M_2\in M_{k_1\times (n-k_1)} (\ZZ)$. Then,
\begin{align*}
\ccode &\supset \ZZ_{p^2}^{k_1 + k_2} \begin{bmatrix} I - p M_1 & 0 \\ 0 & I
\end{bmatrix} \begin{bmatrix} I + p M_1 & A +p M_2  \\
0 & pB\end{bmatrix}
\\ &= \ZZ_{p^2}^{k_1+k_2} \begin{bmatrix} I & A + p(M_2 - M_1 A) \\
0 & pB \end{bmatrix}.
\end{align*}
Taking $N=M_2 -M_1 A$, we have
\begin{align*}
|\ccode| &\geq \left|\ZZ_{p^2}^{k_1+k_2} \begin{bmatrix} I & A + pN  \\
0 & pB\end{bmatrix}\right|
\\ &= p^{2k_1+k_2} 
\\ &=
|\ccode_2||\ccode_1|
%\\ &=
%|\iota ^{-1} (\ccode )| |\pi (\ccode )| 
%\\ &=
%|\iota \left(\iota ^{-1} (\ccode )\right)| |\pi (\ccode )| 
%\\ &=
%|\mbox{Im}\;\iota\cap\ccode | |\pi (\ccode )| 
%\\ &=
%|\mbox{Ker}\;\pi\cap\ccode | |\pi (\ccode )| 
\\ &=
|\ccode | 
\end{align*}
by (\ref{cardC}).
Thus, $\ccode$ has a generator matrix (\ref{Cgenmat}). It remains to show that for $k_2=0$, the matrix $N$ in (\ref{Cgenmat}) is determined uniquely by $\ccode$. Indeed, if 
$N_1, N_2 \in M_{k_1\times(n-k_1)} (\ZZ)$ and
\begin{equation*}\ZZ_{p^2}^{k_1} \begin{bmatrix} I & A+ pN_1 \end{bmatrix} = \ZZ_{p^2}^{k_1} \begin{bmatrix} I & A+ pN_2 \end{bmatrix},\end{equation*} then $A+pN_{1} \equiv A+pN_{2} \pmod{p^2}$, and hence $N_{1} \equiv N_{2}  \pmod{p}$.
\end{proof}

For the remainder of this section, we assume $\ccode_1 \subset \ccode_2 \subset \ccode_1^{\perp}$. This implies
\begin{equation}\label{soC1}
I + AA^t \equiv 0 \pmod{p}
\end{equation}
and
\begin{equation}\label{soC1inC2}
AB^t \equiv 0 \pmod{p}.
\end{equation}
Moreover, when $p=2$, we further assume
$\ccode_1$ to be doubly even, or equivalently, 
\begin{equation}\label{deC1}
\diag(I+AA^t)\equiv 0 \pmod{4}.
\end{equation}
This is justified by Lemma~\ref{DEres}.

%(the number of pure codes)
\begin{lem}\label{C1=C2}
The number of free self-orthogonal codes $\ccode\subseteq \ZZ_{p^2}^n$ such that $\pi (\ccode) = \iota ^{-1}(\ccode)=\ccode_1$ is $p^{k_{1}(2n-3k_{1}-1)/2}$ for odd primes $p$ and $2^{k_{1}(2n-3k_{1}+1)/2}$ for $p = 2$. 
\end{lem}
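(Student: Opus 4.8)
The plan is to turn the count into a linear-algebra problem over $\ZZ_p$ by means of Lemma~\ref{genmatpure}. Since $\ccode$ is free with $\pi(\ccode)=\ccode_1$, we automatically have $\iota^{-1}(\ccode)=\ccode_1$, so Lemma~\ref{genmatpure} applies with $k_2=0$: each such $\ccode$ has a generator matrix $G=\begin{bmatrix}I & A+pN\end{bmatrix}$ with $N\in M_{k_1\times(n-k_1)}(\ZZ)$, and $N$ is unique modulo $p$; conversely, every such $N$ gives a free code with residue $\ccode_1$. Thus the free self-orthogonal codes $\ccode$ with $\pi(\ccode)=\iota^{-1}(\ccode)=\ccode_1$ are in bijection with the matrices $N\in M_{k_1\times(n-k_1)}(\ZZ_p)$ for which $\ZZ_{p^2}^{k_1}G$ is self-orthogonal, and it remains to count these.

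Self-orthogonality of $\ccode=\ZZ_{p^2}^{k_1}G$ is equivalent to $GG^t\equiv 0\pmod{p^2}$. Expanding gives
\[
GG^t=I+AA^t+p(AN^t+NA^t)+p^2NN^t,
\]
so, writing $I+AA^t=pC$ with $C\in M_{k_1}(\ZZ)$ symmetric (possible by (\ref{soC1})), self-orthogonality is equivalent to $\Psi_A(N)\equiv -C\pmod p$, where $\Psi_A$ is the map of Lemma~\ref{rankA} over $K=\ZZ_p$ with $A$ reduced modulo $p$. Here (\ref{soC1}) also gives $AA^t\equiv -I\pmod p$, so $AA^t$ is invertible over $\ZZ_p$, forcing $\rank A=k_1$; hence Lemma~\ref{rankA} identifies the image of $\Psi_A$ with $\Sym_{k_1}(\ZZ_p)$ when $p$ is odd and with $\Alt_{k_1}(\ZZ_p)$ when $p=2$.

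I then check that $-C\bmod p$ lies in this image, so that the solutions $N$ form a nonempty coset of $\kernel\Psi_A$. For odd $p$ this is immediate since $-C$ is symmetric. For $p=2$, membership in $\Alt_{k_1}(\ZZ_2)$ requires $\diag(C)\equiv 0\pmod 2$, i.e. $\diag(I+AA^t)\equiv 0\pmod 4$, which is precisely hypothesis (\ref{deC1}), justified by Lemma~\ref{DEres}; thus $-C\bmod 2$ is symmetric with vanishing diagonal, hence alternating. In either case the number of admissible $N$ modulo $p$ equals
\[
|\kernel\Psi_A|=\frac{p^{k_1(n-k_1)}}{|\Psi_A(M_{k_1\times(n-k_1)}(\ZZ_p))|}
\]
by rank-nullity.

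Substituting $|\Sym_{k_1}(\ZZ_p)|=p^{k_1(k_1+1)/2}$ for odd $p$ gives $p^{k_1(n-k_1)-k_1(k_1+1)/2}=p^{k_1(2n-3k_1-1)/2}$, and substituting $|\Alt_{k_1}(\ZZ_2)|=2^{k_1(k_1-1)/2}$ for $p=2$ gives $2^{k_1(n-k_1)-k_1(k_1-1)/2}=2^{k_1(2n-3k_1+1)/2}$, as claimed. The only delicate points are recognizing that self-orthogonality of $\ccode_1$ forces $\rank A=k_1$ (so that Lemma~\ref{rankA} applies) and, for $p=2$, that the doubly-even hypothesis is exactly what makes the equation $\Psi_A(N)\equiv -C$ solvable; neither is a serious obstacle once the set-up is in place.
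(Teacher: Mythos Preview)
Your proof is correct and follows essentially the same approach as the paper: reduce to counting $N\bmod p$ via Lemma~\ref{genmatpure}, translate self-orthogonality into $\Psi_A(N)\equiv -\frac{1}{p}(I+AA^t)\bmod p$, invoke Lemma~\ref{rankA} to identify the image of $\Psi_A$, and compute the fiber size as $|\kernel\Psi_A|$. You are in fact slightly more explicit than the paper in justifying $\rank A=k_1$ from (\ref{soC1}), a hypothesis of Lemma~\ref{rankA} that the paper uses without comment.
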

\begin{proof}
By Lemma~\ref{genmatpure}, a code $\ccode\subset \ZZ_{p^2}^{n}$ such that $\pi (\ccode) = \iota^{-1}(\ccode) = \ccode_1$ has a generator matrix $\begin{bmatrix} I & A+pN \end{bmatrix}$ for some matrix $N\in M_{k_1 \times (n-k_1)}(\ZZ)$ which is unique modulo $p$. Observe that $\ccode$ is self-orthogonal if and only if
\begin{equation}\label{modp2} I + A A^t + p(AN^t + NA^t) \equiv 0 \pmod{p^2}.\end{equation}
By (\ref{soC1}),
%\begin{equation}\label{modp2_2} A{}^t\! N + N {}^t\!A %\equiv -\frac{1}{p}(I + A {}^t\!A) %\pmod{p}.\end{equation}
$\displaystyle -\frac{1}{p}(I + A A^t)\bmod{p}$ is a symmetric matrix over $\ZZ_p$ if $p$ is odd, and by (\ref{deC1}), it is an alternating matrix over $\ZZ_p$ if $p=2$. Therefore by Lemma~\ref{rankA},
\begin{align*}
&|\{\ccode\mid \ccode \text{ is self-orthogonal, }\pi (\ccode) = \iota ^
{-1}(\ccode)=\ccode_1 \}| 
\\ &=
\left|\{ N \bmod{p}\mid N\in M_{k_1\times (n-k_1)}(\ZZ)\;\mbox{satisfies}\; (\ref{modp2}) \}\right| 
\\ &= 
\left|\left\{ N \mid N\in M_{k_{1}\times (n-k_{1})}(\ZZ_{p})\;\mbox{and}\; \Psi_A (N) = -\frac{1}{p}(I + A A^t)\bmod{p} \right\}\right| 
\\ &=
\left| \Psi_A ^{-1} \left( - \frac{1}{p}(I + A A^t)\bmod{p} \right) \right| 
\\ &= 
|\kernel \Psi_A | 
\\ &= 
\left\{\begin{array}{cl} 
\displaystyle \frac{|M_{k_1\times (n-k_1)}(\ZZ_{p})|}{|\Sym_{k_1}(\ZZ_{p})|} & \text{if}\;p\;\text{is odd},   \\\\
\displaystyle \frac{|M_{k_1\times (n-k_1)}(\ZZ_{2})|}{|\Alt_{k_1}(\ZZ_{2})|} & \text{if}\;p = 2, \end{array} \right.
\\ &=
\begin{cases}
p^{k_1(n-k_1)-k_1(k_1+1)/2} & \text{if $p$ is odd,}  \\
2^{k_1(n-k_1)-k_1(k_1-1)/2} & \text{if $p = 2$.} 
\end{cases}
\end{align*}
\end{proof}

Let us consider sets
\begin{align*}
X &= \{\ccode\mid \ccode \text{ is self-orthogonal, }\pi (\ccode) = \iota ^
{-1}(\ccode)=\ccode_1 \} \; \mbox{and} \\
X'&= \{ \ccode ' \mid \ccode' \text{ is self-orthogonal, } \pi (\ccode ')=\ccode_1, \iota ^{-1}(\ccode ') = \ccode_2 \}.
\end{align*}
By Lemma~\ref{C1=C2}, we have $|X|=p^{k_{1}(2n-3k_{1}-1)/2}$ if $p$ is odd and $|X|=2^{k_{1}(2n-3k_{1}+1)/2}$ if $p=2$.

%uniqueness of C' containing C
\begin{lem}\label{unique}
If $\ccode\in X$, then there exists a unique $\ccode'\in X'$ containing $\ccode$.
%For all $\ccode\in X$,  $|\{\ccode' \in X' | \ccode \subset \ccode ' \}|=1$.
\end{lem}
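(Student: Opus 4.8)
The plan is to pin down generator matrices via Lemma~\ref{genmatpure} and exhibit $\ccode'$ explicitly. Given $\ccode\in X$, Lemma~\ref{genmatpure} supplies a generator matrix $\begin{bmatrix} I & A+pN\end{bmatrix}$ of $\ccode$ with $N\in M_{k_1\times(n-k_1)}(\ZZ)$ unique modulo $p$, and since $\ccode$ is self-orthogonal, $N$ satisfies (\ref{modp2}). The natural candidate for $\ccode'$ is the code generated by
\[
\begin{bmatrix} I & A+pN\\ 0 & pB\end{bmatrix}.
\]
I would first check $\ccode'\in X'$ and $\ccode\subseteq\ccode'$, then show that any member of $X'$ containing $\ccode$ must coincide with this $\ccode'$.

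For existence, note that the displayed matrix reduces modulo $p$ (after deleting the zero rows coming from $pB$) to $\begin{bmatrix} I & A\\ 0 & B\end{bmatrix}$, while its first $k_1$ rows reduce to $\begin{bmatrix} I & A\end{bmatrix}$; hence $\pi(\ccode')=\ccode_1$ and $\iota^{-1}(\ccode')=\ccode_2$, and the first $k_1$ rows form a generator matrix of $\ccode$, so $\ccode\subseteq\ccode'$. Self-orthogonality is read off the Gram matrix modulo $p^2$: the $(1,1)$-block is $I+AA^t+p(AN^t+NA^t)\equiv0$ by (\ref{modp2}), the $(1,2)$-block is $pAB^t\equiv0$ by (\ref{soC1inC2}), and the $(2,2)$-block is $p^2BB^t\equiv0$. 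Thus $\ccode'\in X'$. The key observation making this work is that passing from $\ccode$ to $\ccode'$ introduces the single new cross-term $pAB^t$, which vanishes precisely because $\ccode_1\subset\ccode_2\subset\ccode_1^\perp$; so $N$ itself (no correction needed) already produces a self-orthogonal $\ccode'$.

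For uniqueness, suppose $\ccode''\in X'$ also contains $\ccode$. By Lemma~\ref{genmatpure} it has a generator matrix $\begin{bmatrix} I & A+pN''\\ 0 & pB\end{bmatrix}$. Since $\ccode\subseteq\ccode''$, each row $\begin{bmatrix} e_i & (A+pN)_i\end{bmatrix}$ of a generator matrix of $\ccode$ is a $\ZZ_{p^2}$-combination of the rows of this matrix; matching the first $k_1$ coordinates forces the $i$-th top row to be used with coefficient $1$, and comparing the rest gives $p(N-N'')_i\equiv p\,c_iB\pmod{p^2}$ for some vector $c_i$, i.e.\ $N\equiv N''+CB\pmod p$ where $C$ has rows $c_i$. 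Then each top row $\begin{bmatrix} e_i & (A+pN'')_i\end{bmatrix}$ of the generator matrix of $\ccode''$ equals $\begin{bmatrix} e_i & (A+pN)_i\end{bmatrix}-\begin{bmatrix} 0 & p(CB)_i\end{bmatrix}$ modulo $p^2$, a row of the generator matrix of $\ccode'$ minus a $\ZZ_{p^2}$-combination of the rows $\begin{bmatrix} 0 & pB\end{bmatrix}$; since those rows also lie in $\ccode'$, we get $\ccode''\subseteq\ccode'$, and $|\ccode''|=|\ccode_1||\ccode_2|=|\ccode'|$ by (\ref{cardC}) yields $\ccode''=\ccode'$.

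The whole argument is essentially bookkeeping, and the only mildly delicate point is this last step: one must verify that the apparent freedom in $N''$ (constrained only to a coset of the matrices whose rows lie in the row space of $B$) disappears once one passes from generator matrices back to codes. It is worth remarking on the conceptual content: the candidate $\ccode'$ is exactly $\ccode+\iota(\ccode_2)$, so the statement amounts to saying that $\pi(\ccode)=\ccode_1$ together with $\ccode_1\subset\ccode_2\subset\ccode_1^\perp$ and a cardinality count via (\ref{cardC}) determine $\ccode'$; one could instead run the proof in this coordinate-free form, deriving self-orthogonality of $\ccode+\iota(\ccode_2)$ directly from $\ccode_2\subset\ccode_1^\perp$.
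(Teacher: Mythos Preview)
Your proof is correct and follows essentially the same approach as the paper: construct $\ccode'$ by adjoining the rows $\begin{bmatrix}0&pB\end{bmatrix}$ to a generator matrix of $\ccode$, verify $\ccode'\in X'$ via (\ref{modp2}) and (\ref{soC1inC2}), then argue uniqueness by comparing generator matrices and invoking (\ref{cardC}). The only difference is that the paper's uniqueness step is slightly more economical: rather than solving for $N''$ in terms of $N$, it simply observes that Lemma~\ref{genmatpure} forces $\ZZ_{p^2}^{k_2}\begin{bmatrix}0&pB\end{bmatrix}\subset\ccode''$ for any $\ccode''\in X'$, whence $\ccode'=\ccode+\ZZ_{p^2}^{k_2}\begin{bmatrix}0&pB\end{bmatrix}\subset\ccode''$ directly---which is exactly the coordinate-free viewpoint you mention in your closing remark.
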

\begin{proof}
By Lemma~\ref{genmatpure}, $\ccode$ has a generator matrix
$\begin{bmatrix} I & A + pN \end{bmatrix}$ for some $N$. Then the code
$\ccode_{0}'$ with generator matrix
$$\begin{bmatrix} I & A + pN\\ 0 & p B  \end{bmatrix}$$ satisfies $\pi(\ccode_{0}')=\ccode_1$ and $\iota^{-1}(\ccode_{0}')=\ccode_2$.
Since $\ccode\in X$, (\ref{soC1inC2}) implies that $\ccode_{0}'$ is self-orthogonal, hence $\ccode_{0}'\in X'$. If 
$\ccode'\in X'$ and $\ccode\subset\ccode'$, then Lemma~\ref{genmatpure} implies $\ZZ_{p^2}^{k_2}\begin{bmatrix} 0 & pB \end{bmatrix}\subset\ccode'$. This, together with $\ccode
\subset\ccode'$ forces $\ccode_{0}'
\subset\ccode'$, and hence $\ccode_{0}'=\ccode'$.
\end{proof}

%(the number of $\ccode$'s contained in a given $\ccode'$)
\begin{lem}\label{Ms}
Let $\ccode'\in X'$. Then $|\{\ccode\in X \mid \ccode \subset\ccode'\}|=p^{k_{1}k_{2}}$.  
\end{lem}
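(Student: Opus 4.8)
The goal is to count, for a fixed $\ccode'\in X'$, the number of self-orthogonal free codes $\ccode$ with $\pi(\ccode)=\iota^{-1}(\ccode)=\ccode_1$ that are contained in $\ccode'$. The plan is to fix a generator matrix $\begin{bmatrix} I & A+pN_0\\ 0 & pB\end{bmatrix}$ of $\ccode'$ (which exists by Lemma~\ref{genmatpure}, using that $\pi(\ccode')=\ccode_1$, $\iota^{-1}(\ccode')=\ccode_2$), and then parametrize the subcodes $\ccode\subset\ccode'$ of the required shape. By Lemma~\ref{genmatpure} such a $\ccode$ has a generator matrix $\begin{bmatrix} I & A+pN\end{bmatrix}$ with $N$ unique modulo $p$; the condition $\ccode\subset\ccode'$ means the single generator row $(I,\,A+pN)$ lies in $\ccode'$. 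Writing this row as a $\ZZ_{p^2}$-combination of the rows of the generator matrix of $\ccode'$, the identity block forces the coefficient of the top block to be $I$, so the row equals $(I,\,A+pN_0)+p(\text{combination of rows of }B)$, i.e. $N\equiv N_0 + CB\pmod p$ for some $C\in M_{k_1\times k_2}(\ZZ_p)$.

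So I would reduce the count to the number of distinct matrices $N_0+CB\bmod p$ as $C$ ranges over $M_{k_1\times k_2}(\ZZ_p)$, which is $|M_{k_1\times k_2}(\ZZ_p)|/|\{C : CB\equiv 0\}|$. Since $B$ is a generator matrix of a $\ZZ_p$-code of dimension $k_2$ (the torsion code $\ccode_2$ has dimension $k_1+k_2$, with the first $k_1$ rows independent, forcing $\rank B=k_2$), the rows of $B$ are linearly independent, so $CB=0$ iff $C=0$. Hence the number of admissible $N\bmod p$ is exactly $|M_{k_1\times k_2}(\ZZ_p)|=p^{k_1k_2}$. I would also need to observe that every such $N$ actually yields a code $\ccode\in X$: self-orthogonality of $\ccode$ is automatic because $\ccode\subset\ccode'$ and $\ccode'$ is self-orthogonal, and the conditions $\pi(\ccode)=\iota^{-1}(\ccode)=\ccode_1$ hold by construction of the generator matrix. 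Thus the map $\ccode\mapsto (N\bmod p)$ is a bijection onto this set of size $p^{k_1k_2}$.

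The main obstacle is the bookkeeping in the step ``$(I,A+pN)\in\ccode'$ forces $N\equiv N_0+CB\pmod p$'': one must be careful that the $\ZZ_{p^2}$-coefficient vector $(a_1,a_2)\in\ZZ_{p^2}^{k_1}\times\ZZ_{p^2}^{k_2}$ expressing the row satisfies $a_1\equiv I\pmod{p^2}$ (from the identity block, as there is no $p$-torsion there), so that $a_1(A+pN_0)\equiv A+pN_0\pmod{p^2}$ modulo only a $p$-multiple coming from $pa_1N_0$ — actually $a_1=I+p a_1'$ contributes $pa_1'A$, which must be absorbed, and here one uses $a_2(pB)$ together with the freedom in $N_0$; tracking these $p$-linear terms carefully is where the argument could go wrong. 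The remaining pieces — independence of the rows of $B$, and the automatic self-orthogonality — are routine given the standing hypotheses of the section.
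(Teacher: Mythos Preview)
Your approach is correct and essentially identical to the paper's: the paper defines the same bijection $M\bmod p \mapsto \ZZ_{p^2}^{k_1}\begin{bmatrix} I & A+p(N_0+MB)\end{bmatrix}$ from $M_{k_1\times k_2}(\ZZ_p)$ to $\{\ccode\in X\mid\ccode\subset\ccode'\}$ and checks injectivity (via $\rank(B\bmod p)=k_2$) and surjectivity directly. The obstacle you flag is not actually one: since the first $k_1$ columns of the generator matrix of $\ccode'$ form the identity $I_{k_1}$ over $\ZZ_{p^2}$, the coefficient block $a_1$ is forced to equal $I$ exactly in $\ZZ_{p^2}$ (not merely modulo $p$), so no residual $pa_1'A$ term arises and the equation reduces cleanly to $N\equiv N_0+CB\pmod p$.
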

\begin{proof}
Let $\begin{bmatrix} I & A + pN \\
0 & p B \end{bmatrix}$ be a generator matrix of $\ccode'$. Consider the map
%$$\begin{array}{ccl}
%M_{k_1\times k_2} (\ZZ _{p^2})/ pM_{k_1\times k_2} (\ZZ _{p^2})& \rightarrow & \{ \ccode \in X | \ccode \subset \ccode' \} \\\\
%M \bmod{p} & \mapsto & \begin{bmatrix} I & h(A)+\iota((N + MB)\end{bmatrix}.
%\end{array}$$
$$\begin{array}{ccl}
\varphi : M_{k_1\times k_2} (\ZZ_p) & \rightarrow & \{ \ccode \in X \mid \ccode \subset \ccode' \} \\\\
M \bmod p & \mapsto & \ZZ_{p^2}^{k_1}\begin{bmatrix} I & A+p(N + MB)\end{bmatrix}.
\end{array}$$
We claim that this map is bijective. Suppose $M_1,M_2\in M_{k_1\times k_2} (\ZZ)$, 
$\varphi(M_{1})= \varphi(M_{2})$. Then we have 
$A+p(N + M_{1}B) \equiv A+p(N + M_{2}B) \pmod{p^2}$.
Since $\rank(B\bmod{p})=k_2$, we conclude $M_{1} \equiv M_{2} \pmod{p}$,
which shows that $\varphi$ injective. Next we show that $\varphi$ is surjective. Suppose $\ccode\in X$ and $\ccode\subset\ccode'$. Then by Lemma~\ref{genmatpure}, $\ccode = \ZZ_{p^2}^{k_1}\begin{bmatrix} I & A+ pF \end{bmatrix}$ for some matrix $F$. Since $\ccode\subset\ccode'$,
%Then
%\[
%\begin{bmatrix} I & A+ pF\end{bmatrix}
%= \begin{bmatrix} G & M\end{bmatrix}
%\begin{bmatrix} I & A+ pN \\ 0 & pB \end{bmatrix}
%\]
%for some matrices $G$
$A+pF\equiv A + pN + pMB \pmod{p^2}$ for some matrix $M$. Then we have $F\equiv N+MB\pmod{p}$, so we conclude that $\varphi$ is surjective. Therefore, the map $\varphi$ is bijective. It follows that the number of codes $\ccode\in X$ contained in $\ccode'$ is $p^{k_{1}k_2}$. 
%Take $\ZZ_{p^2}^{k_1}\begin{bmatrix} I & A+ pF\end{bmatrix}$ for some matrix $F$. For unknown matrices $G_1$ and $G_2$, we have 
%\begin{align*}
%\begin{bmatrix} I & A+ pF\end{bmatrix} &= \begin{bmatrix} G_1 & G_2\end{bmatrix} \begin{bmatrix} I & A+ pN \\ 0 & pB \end{bmatrix} \\ &=
%\begin{bmatrix} G_1 & G_{1}A + p(G_{1}N + G_2 B) \end{bmatrix}.
%\end{align*}
%Therefore, we have $G_1 = I$ and $A+ pF = A + p(N+G_{2} B)$. This implies $F \equiv N + G_{2}B \bmod{p}$, so we take $M=G_2$. Therefore, the map is a bijection. We have $p^{k_{1}k_2}$ $M$'s, which is also the number of codes $\ccode$.
\end{proof}

%(The number of codes with prescribed $\ccode_1$ and $\ccode_2$)
\begin{thm}
\label{C1subC2} Let $\ccode_1$ and $\ccode_2$ be codes of length $n$ over $\ZZ_{p}$ where $\ccode_1\subset \ccode_2\subset\ccode_1 ^{\perp}$. Assume further that $\ccode_1$ is doubly even when $p=2$. If $\dim \ccode_1 = k_1$ and $\dim \ccode_2 =k_1 + k_2$, then the number of self-orthogonal codes $\ccode$ of length $n$ over $\ZZ_{p^2}$ such that $\pi (\ccode) = \ccode_1$ and $\iota ^{-1}(\ccode)=\ccode_2$ is $\displaystyle p^{k_1(2n-3k_1-1-2k_2)/2}$ for odd primes $p$ and $\displaystyle 2^{k_1(2n-3k_1+1-2k_2)/2}$ for $p = 2$. 
\end{thm}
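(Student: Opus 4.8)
The plan is to count the set $X'$ directly and then relate it to what we already know. We have established by Lemma~\ref{C1=C2} the cardinality of $X$, the set of free self-orthogonal codes with residue and torsion both equal to $\ccode_1$, and by Lemmas~\ref{unique} and~\ref{Ms} we have a combinatorial correspondence between $X$ and $X'$. Concretely, Lemma~\ref{unique} gives a well-defined map $X\to X'$ sending $\ccode$ to the unique $\ccode'\in X'$ containing it, and Lemma~\ref{Ms} says every fiber of this map has size exactly $p^{k_1k_2}$. Therefore $|X'|=|X|/p^{k_1k_2}$, and the desired count of self-orthogonal codes with $\pi(\ccode)=\ccode_1$, $\iota^{-1}(\ccode)=\ccode_2$ is precisely $|X'|$.

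First I would note that the quantity we are asked to compute is by definition $|X'|$, since $X'$ was introduced as exactly the set of self-orthogonal codes of length $n$ over $\ZZ_{p^2}$ with $\pi(\ccode')=\ccode_1$ and $\iota^{-1}(\ccode')=\ccode_2$. Then I would invoke the double counting: the map $X\to X'$ of Lemma~\ref{unique} is surjective (given $\ccode'\in X'$, any $\ccode$ in the nonempty fiber of Lemma~\ref{Ms} maps to it) with all fibers of size $p^{k_1k_2}$, so $|X|=p^{k_1k_2}|X'|$. Hence
\[
|X'|=\frac{|X|}{p^{k_1k_2}}.
\]

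Finally I would substitute the values of $|X|$ from Lemma~\ref{C1=C2}. For odd $p$ this gives $p^{k_1(2n-3k_1-1)/2}/p^{k_1k_2}=p^{k_1(2n-3k_1-1)/2-k_1k_2}=p^{k_1(2n-3k_1-1-2k_2)/2}$, and for $p=2$ it gives $2^{k_1(2n-3k_1+1)/2}/2^{k_1k_2}=2^{k_1(2n-3k_1+1-2k_2)/2}$, which are exactly the claimed formulas. The hypotheses $\ccode_1\subset\ccode_2\subset\ccode_1^\perp$ and, for $p=2$, double-evenness of $\ccode_1$, are exactly what is needed to apply Lemmas~\ref{C1=C2}, \ref{unique}, \ref{Ms} (they encode conditions~(\ref{soC1}), (\ref{soC1inC2}), (\ref{deC1})), so no further verification is required.

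There is essentially no obstacle left: all the genuine content lives in the three preceding lemmas, and this proof is just the bookkeeping that assembles them. The only point to state carefully is that $X$ and $X'$ are nonempty precisely under the standing assumptions — this is implicit in Lemma~\ref{unique}, since the code $\ccode_0'$ constructed there witnesses $X'\neq\emptyset$ whenever $X\neq\emptyset$, and $X\neq\emptyset$ follows from Lemma~\ref{C1=C2} together with the surjectivity of $\Psi_A$ (Lemma~\ref{rankA}). Hence the counting identity $|X|=p^{k_1k_2}|X'|$ holds unconditionally in our setting, and the theorem follows.
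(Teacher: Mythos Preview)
Your proposal is correct and takes essentially the same approach as the paper: both arguments use Lemma~\ref{unique} and Lemma~\ref{Ms} to establish the double-counting identity $|X|=p^{k_1k_2}|X'|$, then substitute the value of $|X|$ from Lemma~\ref{C1=C2}. The only cosmetic difference is that the paper begins by explicitly reducing to the case where $\ccode_1$ and $\ccode_2$ have the standard generator matrices~(\ref{C1}) and~(\ref{C2}), a WLOG step you leave implicit.
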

\begin{proof}
We may assume without loss of generality that $\ccode_1$ and $\ccode_2$ are codes with generator matrices given by 
%(\ref{soC1}) and (\ref{soC1inC2}), 
(\ref{C1}) and (\ref{C2}), 
respectively.
By Lemma~\ref{unique} and Lemma~\ref{Ms}, we have
\begin{align*}
|X'|&=
p^{-k_1k_2}\sum_{\ccode'\in X'}
|\{\ccode\in X\mid\ccode\subset\ccode'\}|
\\ &=
p^{-k_1k_2}\sum_{\ccode\in X}
|\{\ccode'\in X'\mid\ccode\subset\ccode'\}|
\\ &=
p^{-k_1k_2}|X|.
\end{align*}
The result then follows from Lemma~\ref{C1=C2}.
\end{proof}

\section{Quaternary even codes with prescribed residue and torsion}

We are mainly concerned with the enumeration of codes containing the vector
$\allone$, or codes containing a vector each of whose
coordinate is $1$ or $-1$. This restriction forces the length of a code to be a multiple of $8$.
First, let us consider such codes containing $\allone$. Let $n\in 8\ZZ$, $\displaystyle 1\leq k_1\leq \frac
{n}{2}$, $\ccode_1, \ccode_2$ binary codes of length $n$ 
such that $\ccode_1$ is doubly even and has generator matrix
\begin{equation}\label{C1E}
\begin{bmatrix}
1&\allone&\allone\\
0&I_{k_1-1}&A
\end{bmatrix},
\end{equation}
$\ccode_2$ has generator matrix
\begin{equation}\label{C2E}
\begin{bmatrix}
1&\allone&\allone\\
0&I_{k_1-1}&A \\
0&0&B 
\end{bmatrix},
\end{equation}
$A\in M_{(k_{1}-1)\times (n-k_1)}(\ZZ)$, 
$B\in M_{k_2\times (n-k_1)}(\ZZ)$ and $\dim\ccode_1=k_1$, $\dim\ccode_2=k_1+k_2$. 
Moreover, we assume that $\ccode_1\subset\ccode_2\subset\ccode_1^{\perp}$ and $\ccode_1$ is doubly even.
Then the matrices $A$ and $B$ satisfy (\ref{soC1})--(\ref{deC1}), and
\begin{equation}\label{evenB}
\allone B^t \equiv 0 \pmod{2}.
\end{equation}
We may assume without loss of generality that the entries of the matrix $A$ are all $0$ or $1$. Then
\begin{equation}\label{devenC1}
\allone + \allone A^{t} \equiv 0 \pmod{4}.
\end{equation}
% Let $\alpha$ be the map defined in Lemma~\ref{sumofmap}.
%$A$ is a matrix with entries in $\ZZ_{4}$, 
%$B$ is a matrix with entries in $\ZZ_{2}$,
%We assume $\allone +\allone A^t \equiv 0 \pmod{2}$, $I+AA^t\equiv 0 \pmod{2}$, $AB^t\equiv 0 \pmod{2}$ and diag$(I+AA^t)\equiv 0 \pmod{4}$.
\begin{lem}\label{allone}
The vector $\allone$ does not belong to the row space
of the matrix $A$ over $\ZZ_2$.
\end{lem}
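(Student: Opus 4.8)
The plan is to derive a contradiction from the assumption that $\allone$ lies in the row space of $A$ over $\ZZ_2$. Recall that $\ccode_1$ is doubly even of dimension $k_1$ with generator matrix \eqref{C1E}, so its first row is $\allone$ (the all-ones vector of length $n$), and the remaining $k_1-1$ rows are $[0\mid I_{k_1-1}\mid A]$. Suppose, for contradiction, that there exists a row vector $v\in\ZZ_2^{k_1-1}$ with $vA=\allone$ over $\ZZ_2$ (here $\allone$ on the right is of length $n-k_1$). Then the codeword $w=[0\mid v\mid vA]=[0\mid v\mid\allone]$ of $\ccode_1$ has the first block zero, the middle block $v$, and the last block all ones.

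Next I would compute the Euclidean weight, or rather the Hamming weight modulo $4$, of a suitable codeword. Since $\ccode_1$ is doubly even, every codeword has Hamming weight divisible by $4$. I would apply this to $w$ and to $w+\allone$. The codeword $w$ has weight $\wt(v)+(n-k_1)$, while $w+\allone=[\allone\mid \allone+v\mid 0]$ has weight $k_1+((k_1-1)-\wt(v))=2k_1-1-\wt(v)$. Adding these two weights gives $(n-k_1)+(2k_1-1-\wt(v))+\wt(v)=n+k_1-1$. Since both $w$ and $w+\allone$ lie in the doubly even code $\ccode_1$, their weights are each $\equiv0\pmod4$, so $n+k_1-1\equiv0\pmod4$. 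But $n\in8\ZZ$, so this forces $k_1\equiv1\pmod4$, which is not yet a contradiction; I need a sharper invariant.

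The sharper tool is to use the doubly-even condition more carefully together with the structure of the generator matrix. Observe that $w$ and the first generator row $\allone$ have inner product $w\cdot\allone=\wt(w)=\wt(v)+(n-k_1)\equiv0\pmod2$ automatically (they are orthogonal since $\ccode_1\subset\ccode_1^\perp$), but the doubly even condition says more: for any two codewords $x,y$ in a doubly even binary code, $\wt(x+y)=\wt(x)+\wt(y)-2(x\cdot y)$ and all of $\wt(x),\wt(y),\wt(x+y)$ are divisible by $4$, forcing $x\cdot y\equiv0\pmod2$, consistent with self-orthogonality; to get a genuine obstruction I would instead track $\wt(v)\bmod 4$. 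From $\wt(w)=\wt(v)+(n-k_1)\equiv0\pmod4$ and $n\equiv0\pmod8$ we get $\wt(v)\equiv k_1\pmod4$, and from $\wt(w+\allone)=2k_1-1-\wt(v)\equiv0\pmod4$ we get $\wt(v)\equiv 2k_1-1\pmod4$; subtracting, $0\equiv k_1-1\pmod4$. Since this is merely a parity-type restriction rather than a contradiction, I expect the real argument to invoke condition \eqref{devenC1}, namely $\allone+\allone A^t\equiv0\pmod4$, which encodes that each row $[0\mid e_i\mid a_i]$ has Hamming weight $1+\wt(a_i)\equiv0\pmod4$, i.e.\ $\wt(a_i)\equiv3\pmod4$ for every row $a_i$ of $A$.

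Putting this together is the heart of the proof, and I expect it to be the main obstacle: I would sum the relation $vA=\allone$ coordinatewise while keeping track of carries, using that over $\ZZ$ we have $\sum_{i\in\mathrm{supp}(v)}a_i\equiv\allone\pmod2$, hence $\allone\cdot\Bigl(\sum_{i\in\mathrm{supp}(v)}a_i\Bigr)=\sum_{i\in\mathrm{supp}(v)}\wt(a_i)\equiv 3\wt(v)\pmod4$ on one side, and on the other side this count equals $(n-k_1)$ plus twice the number of coordinates where an even positive number of the $a_i$ overlap, so it is $\equiv n-k_1\equiv -k_1\pmod2$ and more precisely we can pin its value mod $4$ using that $\ccode_1$ is doubly even applied to the codeword $w=[0\mid v\mid \allone]$: $\wt(v)+(n-k_1)\equiv0\pmod4$. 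Combining $3\wt(v)\equiv(n-k_1)+2t\pmod4$ (where $t$ counts even overlaps) with $\wt(v)\equiv k_1\pmod 4$ and $n\equiv0\pmod 8$ yields $3k_1\equiv -k_1+2t$, i.e.\ $4k_1\equiv 2t\pmod4$, so $t$ is even; feeding this back and comparing with the weight of $w+\allone$ should close the loop and produce the contradiction $1\equiv0$. The delicate point throughout is bookkeeping the carries when reducing the integer identity $vA=\allone$ modulo $4$, so I would set it up via the clean identity $\wt(x+y)\equiv\wt(x)+\wt(y)\pmod4$ for doubly even $x,y$ (which holds since $x\cdot y\equiv0\pmod 2$ gives $\wt(x+y)=\wt(x)+\wt(y)-2(x\cdot y)\equiv\wt(x)+\wt(y)\pmod 4$), applied inductively to $w=\sum_{i\in\mathrm{supp}(v)}[0\mid e_i\mid a_i]$, to conclude $\wt(w)\equiv\sum_{i\in\mathrm{supp}(v)}\wt([0\mid e_i\mid a_i])\equiv\sum_{i\in\mathrm{supp}(v)}1\cdot(\text{something}\equiv0)\pmod4$; but $\wt(w)=\wt(v)+(n-k_1)$ while the middle block of $w$ really is $v$, giving $\wt(v)+(n-k_1)\equiv \wt(v)\pmod 4$, hence $n-k_1\equiv0\pmod 4$, i.e.\ $k_1\equiv 0\pmod 4$. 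Since we separately derived $k_1\equiv1\pmod4$, these are incompatible, and the assumed $v$ cannot exist. Hence $\allone$ is not in the row space of $A$.
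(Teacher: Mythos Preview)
Your argument has a concrete arithmetic slip and a genuine gap, and neither of your two final congruences for $k_1$ is actually established.

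First, the block sizes in the generator matrix \eqref{C1E} are $1$, $k_1-1$, $n-k_1$. Thus $w+\allone=[1\mid \allone+v\mid 0]$ has weight $1+(k_1-1-\wt(v))=k_1-\wt(v)$, not $2k_1-1-\wt(v)$; the first block contributes $1$, not $k_1$. With the correct value, $\wt(w)+\wt(w+\allone)=n$, and the two doubly-even conditions $\wt(w)\equiv0$ and $\wt(w+\allone)\equiv0\pmod4$ both reduce to the single statement $\wt(v)\equiv k_1\pmod4$. So the claimed congruence $k_1\equiv1\pmod4$ disappears.

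Second, the ``closing'' paragraph does not close. The inductive identity $\wt\bigl(\sum_i r_i\bigr)\equiv\sum_i\wt(r_i)\pmod4$ applied to the rows $r_i=[0\mid e_i\mid a_i]$ with $i\in\mathrm{supp}(v)$ gives $\wt(w)\equiv\wt(v)+\sum_i\wt(a_i)\equiv\wt(v)+3\wt(v)\equiv0\pmod4$, which is information you already had. Your assertion ``$\wt(v)+(n-k_1)\equiv\wt(v)\pmod4$'' is not justified: nothing in the computation produces a bare $\wt(v)$ on the right-hand side, so the derivation of $k_1\equiv0\pmod4$ fails as well. In short, working modulo $4$ alone does not force a contradiction here.

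The missing idea is to pin down $v$ itself, and this needs only orthogonality modulo $2$. Pair $w=[0\mid v\mid\allone]$ with each generator row $[0\mid e_j\mid a_j]$: self-orthogonality gives $v_j+\wt(a_j)\equiv0\pmod2$, and \eqref{devenC1} says each $\wt(a_j)$ is odd, so $v_j=1$ for every $j$, i.e.\ $v=\allone_{k_1-1}$. Then $\wt(w)=(k_1-1)+(n-k_1)=n-1$, which is odd, contradicting even $w\cdot w\equiv\wt(w)\pmod2$. This is exactly the paper's argument (phrased there via the matrix identities $b(I+AA^t)\equiv0$ and $b(\allone+\allone A^t)^t\equiv0\pmod2$): one first forces $b\equiv\allone$ and then reads off the parity contradiction $n-1\equiv0\pmod2$. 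No mod-$4$ bookkeeping is needed.
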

\begin{proof}
If $k_1 =1$, the assertion is trivial, so assume $k_1 >1$. Suppose that $\allone$ is in the row space of $A$ over $\ZZ_{2}$. Then $bA\equiv \allone\pmod{2}$ for some vector $b\in \ZZ^{k_1 -1}$. By (\ref{soC1}) and
(\ref{devenC1}), we have
\begin{align*}
0 & \equiv \begin{bmatrix}b(\allone + \allone A^{t})^t & b(I + AA^{t})\end{bmatrix} \pmod{2}
\\ & \equiv 
\begin{bmatrix}b\allone ^{t} + \allone \allone^{t} & b + \allone A^{t}\end{bmatrix} \pmod{2}
\\ & \equiv 
\begin{bmatrix}b\allone ^{t} + (n-k_1) & b + \allone\end{bmatrix} \pmod{2} &&\text{(by (\ref{devenC1}))}.
\end{align*}
Therefore, $b\equiv \allone \pmod{2}$ and 
\[
b\allone ^{t} + (n-k_1) \equiv ( k_1 -1) + (n-k_1) \not \equiv 0 \pmod{2}. 
\]
This is a contradiction.
\end{proof}

\begin{lem}\label{TypeIISON}
Define the maps
$$\begin{array}{ccl}
\alpha  : M_{(k_1 -1)\times (n-k_1)} (\ZZ_2) & \rightarrow & \ZZ_{2}^{k_1 -1} \\
N \bmod{2}  & \mapsto & \allone N^t \bmod{2}
\end{array}$$
and
$$\begin{array}{cccl}
\Phi_A : & M_{(k_1 -1)\times (n-k_1)} (\ZZ_2) & \rightarrow & \Sym_{k_1 -1}(\ZZ_2) \\
  & N \bmod{2} & \mapsto & AN^t + NA^t + \Diag(AN^t) \bmod{2}.
\end{array}$$
For $N\in M_{(k_1-1)\times (n-k_1)}(\ZZ)$, let 
\[
\ccode=\ZZ_4^{k_{1}}
\begin{bmatrix}
1&\allone&\allone\\
0&I_{k_{1}-1}&A+2N
\end{bmatrix}.
%\subset\ZZ_4^n.
\]
Then $\ccode$ is an even code if and only if $N \bmod{2}$ belongs to the set 
\[
%\alpha^{-1}\left ( \frac{1}{2}(\allone+\allone A^t) \bmod{2}\right )
\kernel\alpha\cap \Phi_A^{-1}\left(\frac{1}{2}(I + AA^t)+\frac{1}{4} (I+\Diag AA^t)
\bmod{2}\right).
\]
\end{lem}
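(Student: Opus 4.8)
The plan is to compute the Euclidean weights of the rows of the generator matrix and of their pairwise sums modulo $8$, and then invoke the criterion from \cite[Lemma 2.2]{BDHO} (stated in Section~2) that a quaternary self-orthogonal code is even if and only if it has a generator matrix all of whose rows have Euclidean weight divisible by $8$. First I would record that $\ccode$ is automatically self-orthogonal: since $\ccode_1$ is doubly even and $\ccode_1\subset\ccode_2\subset\ccode_1^\perp$, conditions \eqref{soC1}--\eqref{deC1} hold for the matrix $[\allone\ \allone;\ I_{k_1-1}\ A]$, and the computation in the proof of Lemma~\ref{C1=C2} shows that adding $2N$ keeps the code self-orthogonal precisely modulo the image of $\Psi_A$, which is irrelevant here because we will be imposing the stronger ``even'' condition anyway. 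So the real content is the weight condition on rows and sums of rows.

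Next I would set up notation: write $r_0=(1,\allone,\allone)$ for the first row and $r_i$ ($1\le i\le k_1-1$) for the row $(0,e_i,a_i+2n_i)$, where $a_i$ and $n_i$ are the $i$-th rows of $A$ and $N$. Because the entries of $A$ are $0$ or $1$, the entry $a_{ij}+2n_{ij}\in\{0,1,2,3,\dots\}$ reduces mod $4$ to $a_{ij}$ when $a_{ij}=0$ and to $1+2n_{ij}$ (i.e.\ $\pm1$) when $a_{ij}=1$; in either case its Euclidean weight is $a_{ij}$ modulo any multiple of $4$, so $\wt_e(a_{ij}+2n_{ij})\equiv a_{ij}\pmod 4$, and more precisely $\wt_e(a_{ij}+2n_{ij})=a_{ij}$ when we only care mod~$4$; the $n$-dependence shows up mod~$8$ only through the weight-$2$ coordinates, which there are none of here. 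Hence the condition that each $r_i$ has Euclidean weight $\equiv0\pmod 8$ is exactly \eqref{devenC1}-type statements together with the mod-$8$ refinement — this is where $\frac14(I+\Diag AA^t)$ enters, since $\wt_e(r_i)=1+\wt(a_i)+4(\text{number of $2$'s})$ and one has to expand $\wt(a_i)=(AA^t)_{ii}$ and track it mod $8$ rather than mod $4$. I would do this bookkeeping carefully: the diagonal entry of $\Phi_A(N)$, namely $(AN^t)_{ii}+ (AN^t)_{ii}=0$ in characteristic 2 is not quite the point; rather $\Phi_A(N)_{ii}=\sum_j a_{ij}n_{ij}=\allone\cdot(a_i\odot n_i)$ and $\wt_e(a_i+2n_i)=\wt(a_i)+4\sum_j a_{ij}n_{ij}\bmod 8$ (since $(1+2n)^2=1+4n\bmod 8$), which is how $\Phi_A$ arrives naturally.

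Then I would handle the pairwise sums. For $i,j\ge1$ with $i\ne j$, $r_i+r_j$ has a $1$ in each of positions $i$ and $j$ among the identity block, and in the $A+2N$ block its coordinates are $(a_{i\ell}+a_{j\ell})+2(n_{i\ell}+n_{j\ell})$; computing $\wt_e$ of this mod $8$ produces a bilinear term that is precisely the off-diagonal part of $\Phi_A(N)$ plus the off-diagonal part of $\frac12(AA^t)$, and the ``$1+1$'' from the two identity coordinates contributes to matching $\frac14(I+\Diag AA^t)$ via the doubly-even hypothesis on $\ccode_1$. For sums $r_0+r_i$, the first coordinate is $1$, the identity block contributes a single $1$, and the last block has coordinates $1+a_{i\ell}+2n_{i\ell}$; here the hypothesis \eqref{devenC1} that $\allone+\allone A^t\equiv0\pmod4$ and $n\in8\ZZ$ make the weight divisible by $8$, and this is exactly the condition $\allone N^t\equiv0$, i.e.\ $N\in\kernel\alpha$. (The sum $r_0$ alone has $\wt_e=1+2(n/2)=1+n$… wait, $\wt_e(r_0)=1+\wt_e(\allone)+\wt_e(\allone)=1+(n-1)/\dots$; more carefully $r_0=(1,\allone,\allone)$ has all coordinates $\pm1$ so $\wt_e(r_0)=n\equiv0\pmod 8$, fine.) Assembling: the row conditions for $r_i$ give the $\Phi_A^{-1}$ membership on the diagonal, the conditions for $r_i+r_j$ give it off-diagonal, and the conditions involving $r_0$ give $N\in\kernel\alpha$; conversely these suffice by \cite[Lemma 2.2]{BDHO}.

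The main obstacle I anticipate is purely computational rather than conceptual: one must track Euclidean weights \emph{modulo $8$}, not merely mod $4$, so the quadratic identity $(1+2t)^2\equiv 1+4t\pmod 8$ and the cross-terms $(a_{i\ell}\pm1)^2$, $(1+2(n_{i\ell}+n_{j\ell}))^2$ have to be expanded consistently and matched against the two matrices $\frac12(I+AA^t)$ and $\frac14(I+\Diag AA^t)$ appearing in the statement. Keeping straight which contributions are integers that need halving (hence why $\frac12(I+AA^t)$ must be integral, guaranteed by \eqref{soC1}) versus those needing quartering (guaranteed by the doubly-even condition \eqref{deC1}/\eqref{devenC1}) is the delicate part. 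Once that dictionary is established, surjectivity or any counting is not needed here — the lemma is just the equivalence, and it follows by comparing the list of ``$\wt_e$ of each generator row $\equiv0\pmod8$'' equations with the definitions of $\alpha$ and $\Phi_A$.
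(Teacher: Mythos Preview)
Your proposal has two genuine gaps.

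First, the claim that $\ccode$ is ``automatically self-orthogonal'' is false: self-orthogonality of $\ccode$ over $\ZZ_4$ requires both
\[
\allone+\allone A^t+2\allone N^t\equiv0\pmod4
\quad\text{and}\quad
I+AA^t+2(AN^t+NA^t)\equiv0\pmod4,
\]
and both conditions depend on $N$. In fact the first, combined with \eqref{devenC1}, is exactly $N\bmod2\in\kernel\alpha$, and the second gives the off-diagonal part of the $\Phi_A$ equation. So self-orthogonality is not a background hypothesis but half of what must be proved; you cannot invoke \cite[Lemma~2.2]{BDHO} until you have it, and checking pairwise Euclidean weights does not directly substitute for it.

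Second, your Euclidean-weight bookkeeping is wrong. You assert $(1+2t)^2\equiv1+4t\pmod8$, but $(1+2t)^2=1+4t(t+1)\equiv1\pmod8$ since $t(t+1)$ is even; thus when $a_{ij}=1$ the entry $a_{ij}+2n_{ij}$ contributes exactly $1$ to $\wt_e$, independent of $n_{ij}$. Conversely, when $a_{ij}=0$ the entry is $2n_{ij}\bmod4$, which \emph{can} be $2$ and then has Euclidean weight $4$---so your claim that ``there are no weight-$2$ coordinates'' is false. The correct row-weight formula is
\[
\wt_e(r_i)\equiv 1+(AA^t)_{ii}+4(AN^t)_{ii}+4(NN^t)_{ii}\pmod8,
\]
and since $(NN^t)_{ii}\equiv(\allone N^t)_i\pmod2$, the diagonal condition only reduces to the stated form \emph{after} one has already imposed $N\bmod2\in\kernel\alpha$.

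The paper's argument sidesteps both issues by working with the Gram matrix directly rather than with Euclidean weights of rows and sums: it writes the two inner-product congruences above for self-orthogonality, identifies the first with $\kernel\alpha$, and then---given self-orthogonality---expresses evenness as the single diagonal congruence $\diag(I+AA^t+2(AN^t+NA^t)+4NN^t)\equiv0\pmod8$, which simplifies using $\diag(NN^t)\equiv\allone N^t\equiv0\pmod2$. Pairwise sums never appear.
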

\begin{proof}
$\ccode$ is self-orthogonal if and only if
\begin{align}
\allone+\allone A^t+2\allone N^t\equiv0\pmod{4},
\label{eq:s1}\\
I+AA^t+2(AN^t+NA^t)\equiv0\pmod{4}.\label{eq:s2}
\end{align}
In view of (\ref{devenC1}), (\ref{eq:s1}) is equivalent to $N\bmod{2}\in\kernel \alpha$.
Moreover, when $\ccode$ is self-orthogonal, 
$\ccode$ is even if and only if
\begin{equation}\label{eq:d1}
\diag(I+AA^t+2(AN^t+NA^t)+4NN^t)\equiv0\pmod{8}.
\end{equation}
Since $\diag NN^t\equiv\allone N^t\equiv 0\pmod{2}$, this is equivalent to
%(\ref{eq:d1}) is equivalent to
%\begin{equation}\label{eq:d2}
%\diag(I+AA^t+2(AN^t+NA^t))\equiv0\pmod{8}.
%\end{equation}
%By (\ref{eq:s1}), this is equivalent to
%\begin{equation}\label{eq:d3}
%2\diag(AN^t+NA^t)\equiv-\allone-\diag AA^t
%\pmod{8},
%\end{equation}
%which is equivalent to
\begin{equation}\label{eq:d4}
\Diag(AN^t)\equiv \frac{1}{4} (I +\Diag AA^t)
\pmod{2}.
\end{equation}
%\begin{align*}
%& AN^t+NA^t + \Diag(AN^t)
%\\ &\equiv \frac{1}{2}(I + AA^t)+\frac{1}{4} (I + 2\Diag (\allone A^t)-\Diag AA^t)
%\pmod{2} \\
%&\in \Phi_A\left(\left\{ N| N\in M_{(k_1 -1) \times (n-k_1)}(\ZZ_{2})\; \mbox{and satisfies}\; (\ref{eq:s2})\; \mbox{and}\; (\ref{eq:d1})\right\}\right).
%\end{align*}
Therefore, the code $\ccode$ is even if and only if $N\bmod{2}\in \kernel \alpha$ and conditions (\ref{eq:s2}) and (\ref{eq:d4}) are
satisfied. The latter two conditions are equivalent to
$$\Phi_A(N\bmod{2})=\frac{1}{2}(I + AA^t)+\frac{1}{4} (I+\Diag AA^t)
\bmod{2}.
$$
%From Lemma \ref{DEres}, $\allone+\allone A^t \equiv 0 \pmod{4}$, so we have $\alpha^{-1}\left (\frac{1}{2}(\allone+\allone A^t) \bmod{2}\right ) =  \alpha^{-1}(0)$.
%From (\ref{eq:s1}), 
%\begin{align*}
%&|\{\ccode\: \mbox{Type II self-orthogonal } | \pi (\ccode) = \iota ^{-1}(\ccode)=\ccode_1 \}| 
%\\ &=
%\left|\left\{ N \bmod{2}| N\in M_{(k_1-1)\times (n-k_1)}(\ZZ)\;\mbox{satisfies}\; \alpha(N)= \frac{1}{2}(\allone+\allone A^t)\pmod{2}\;\mbox{and}\; \right.\right. 
%\\ &\left.\left.\Phi_A(N)= \frac{1}{2}(I + AA^t)+\frac{1}{4} (I + 2\Diag (\allone A^t)-\Diag AA^t)
%\pmod{2}\right\}\right| 
%\end{align*}
\end{proof}

\begin{lem}\label{withone}
The number of free quaternary even codes $\ccode$ of length $n$ containing $\allone$ such that $\pi (\ccode) = \iota ^{-1}(\ccode)=\ccode_1$ is $2^{(k_{1}-1)(2n-3k_{1}-2)/2}$. 
\end{lem}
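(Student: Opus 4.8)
The plan is to count, via Lemma~\ref{genmatpure} and Lemma~\ref{TypeIISON}, the matrices $N$ modulo $2$ for which the code
$\ccode=\ZZ_4^{k_1}\left[\begin{smallmatrix}1&\allone&\allone\\0&I_{k_1-1}&A+2N\end{smallmatrix}\right]$
is even. By Lemma~\ref{genmatpure} every free quaternary code $\ccode$ with $\pi(\ccode)=\iota^{-1}(\ccode)=\ccode_1$ is obtained from a unique such $N\bmod 2$ (with the first row of the generator matrix left unperturbed, which is harmless since $\allone\in\ccode$ and the row space of $[1\ \allone\ \allone]$ together with $\ccode_1$-structure is rigid here — more precisely, a generator matrix of the requested shape exists by Lemma~\ref{genmatpure} applied to the generator matrix (\ref{C1E}), and the perturbation of the first row can be absorbed into a change of generators). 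Hence the number we seek equals the cardinality of
\[
\kernel\alpha\cap\Phi_A^{-1}\!\left(\tfrac12(I+AA^t)+\tfrac14(I+\Diag AA^t)\bmod 2\right),
\]
so the whole problem reduces to showing this fiber is nonempty and computing the common size of the fibers of $\Phi_A\oplus\alpha$ restricted appropriately.

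The key input is Lemma~\ref{sumofmap}: since $\ch\ZZ_2=2$, $\rank A=k_1-1$ (as $A$ is the matrix appearing in (\ref{C1E}) after the leading $I_{k_1-1}$, so its row space together with the all-ones row has full rank $k_1$, forcing $\rank A=k_1-1$), and $\allone$ does not lie in the row space of $A$ by Lemma~\ref{allone}, the map $\Phi_A\oplus\alpha: M_{(k_1-1)\times(n-k_1)}(\ZZ_2)\to\Sym_{k_1-1}(\ZZ_2)\oplus\ZZ_2^{k_1-1}$ is surjective. Consequently every fiber is a coset of $\kernel(\Phi_A\oplus\alpha)$ and in particular nonempty, so the target element $\bigl(\tfrac12(I+AA^t)+\tfrac14(I+\Diag AA^t),\,0\bigr)$ does have a preimage, and the count is
\[
|\kernel(\Phi_A\oplus\alpha)|=\frac{|M_{(k_1-1)\times(n-k_1)}(\ZZ_2)|}{|\Sym_{k_1-1}(\ZZ_2)|\cdot|\ZZ_2^{k_1-1}|}
=\frac{2^{(k_1-1)(n-k_1)}}{2^{k_1(k_1-1)/2}\cdot 2^{k_1-1}}.
\]
Then I would simplify the exponent: $(k_1-1)(n-k_1)-\tfrac{k_1(k_1-1)}{2}-(k_1-1)=(k_1-1)\bigl(n-k_1-\tfrac{k_1}{2}-1\bigr)=\tfrac{(k_1-1)(2n-3k_1-2)}{2}$, matching the claimed $2^{(k_1-1)(2n-3k_1-2)/2}$.

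The main obstacle I anticipate is the bookkeeping at the very first step: confirming that "free quaternary even codes containing $\allone$ with residue and torsion equal to $\ccode_1$" are in bijection with the admissible $N\bmod 2$, i.e. that restricting to generator matrices whose first row is exactly $[1\ \allone\ \allone]$ loses nothing and double-counts nothing. For injectivity one invokes the uniqueness clause of Lemma~\ref{genmatpure} (for $k_2=0$, $N$ is unique modulo $2$), but one must check that the normalization of the first row is compatible — essentially that any free code with the given residue and containing $\allone$ admits a generator matrix of the stated form, which follows because $\allone\in\ccode$ forces a row reducing to $[1\ \allone\ \allone]$ modulo $2$, and after row operations one can clear the $2N$-contribution from that first row without disturbing the $\ccode_1$-part. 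Once that identification is in place, everything else is a direct application of Lemmas~\ref{TypeIISON} and~\ref{sumofmap} plus an exponent computation. I would also note that surjectivity of $\Phi_A\oplus\alpha$ is exactly what guarantees \emph{existence} of at least one such code, so the formula is not vacuous.
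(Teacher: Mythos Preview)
Your proof is correct and follows essentially the same route as the paper's: parametrize the codes by $N\bmod 2$, use Lemma~\ref{TypeIISON} to identify the admissible set as a fiber of $\Phi_A\oplus\alpha$, invoke Lemma~\ref{allone} and Lemma~\ref{sumofmap} for surjectivity, and read off the kernel size. One minor correction: your stated reason for $\rank A = k_1-1$ (``its row space together with the all-ones row has full rank $k_1$'') does not actually follow from the shape of (\ref{C1E}); the correct justification is condition (\ref{soC1}), which gives $AA^t\equiv I\pmod 2$ and hence full row rank for $A$.
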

\begin{proof}
By Lemma~\ref{TypeIISON}, we have
\begin{align*}
&|\{\ccode \mid \ccode \text{ is even, }\pi (\ccode) = \iota ^
{-1}(\ccode)=\ccode_1 , \allone\in\ccode \}| 
\\ &=
\left|\kernel \alpha 
\cap \Phi_A^{-1}\left( \frac{1}{2}(I + AA^t)+\frac{1}{4} (I +\Diag AA^t)
\bmod{2}\right) \right|
\\ &= 
\left|(\Phi_A\oplus\alpha)^{-1}\left(\left(\frac{1}{2}(I + AA^t)+\frac{1}{4} (I+\Diag AA^t)
\bmod{2}\right),0\right)\right|. 
\end{align*} 
By Lemma~\ref{allone}, $\allone$ does not belong to the row space of $A$ over $\ZZ_2$. 
Thus, the map $\Phi_A\oplus\alpha$ is surjective by Lemma~\ref{sumofmap}. 
Therefore, the number of even codes $\ccode$ containing $\allone$ such that 
$\pi (\ccode) = \iota ^
{-1}(\ccode)=\ccode_1$ is 
\begin{equation*}
%&|\{\ccode \mid \ccode \text{ is even, }\pi (\ccode) = \iota ^
%{-1}(\ccode)=\ccode_1 , \allone\in\ccode \}| 
%\\ &=
|\kernel(\Phi_A\oplus\alpha)|=
2^{(k_{1}-1)(n-k_{1})-(k_1 -1)-k_{1}(k_{1}-1)/2}. 
\end{equation*} 
\end{proof}
Let us consider the sets 
\begin{align*}
Y &= \{ \ccode \mid \ccode\; \mbox{is quaternary even},\; \pi (\ccode)=\iota ^{-1}(\ccode) = \ccode_1 ,\; \allone\in\ccode\}\;\mbox{and} \\
Y' &= \{ \ccode' \mid \ccode'\; \mbox{is quaternary even},\; \pi (\ccode')=\ccode_1,\; \iota ^{-1}(\ccode') = \ccode_2, \; \allone\in\ccode' \}.
\end{align*}
By Lemma~\ref{withone}, $|Y|=2^{(k_{1}-1)(2n-3k_{1}-2)/2}$.
\begin{lem}\label{uniqueY}
If $\ccode\in Y$, then there exists a unique $\ccode'\in Y'$ containing $\ccode$.
%For all $\ccode\in Y$,  $|\{\ccode' \in Y' | \ccode \subset \ccode ' \}|=1$.
\end{lem}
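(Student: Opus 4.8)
The plan is to mimic exactly the argument used to prove Lemma~\ref{unique}, replacing the appeal to self-orthogonality by an appeal to evenness, and keeping track of the extra row corresponding to $\allone$. First I would take $\ccode\in Y$; by Lemma~\ref{genmatpure} applied to the residue $\ccode_1$ (whose generator matrix is (\ref{C1E})) and torsion $\ccode_2$ (whose generator matrix is (\ref{C2E})), the code $\ccode$ has a generator matrix of the form
\[
\begin{bmatrix}
1&\allone&\allone\\
0&I_{k_{1}-1}&A+2N
\end{bmatrix}
\]
for some $N\in M_{(k_1-1)\times(n-k_1)}(\ZZ)$, since $\ccode$ is free with $\pi(\ccode)=\iota^{-1}(\ccode)=\ccode_1$. (Here one uses that $\allone\in\ccode$ is encoded by the first row, as in the normalized form (\ref{C1E}).) Now form the candidate extension $\ccode_0'$ with generator matrix
\[
\begin{bmatrix}
1&\allone&\allone\\
0&I_{k_{1}-1}&A+2N\\
0&0&2B
\end{bmatrix},
\]
which by construction satisfies $\pi(\ccode_0')=\ccode_1$, $\iota^{-1}(\ccode_0')=\ccode_2$, and $\allone\in\ccode_0'$.

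The key step is to show $\ccode_0'$ is even, i.e.\ $\ccode_0'\in Y'$. Since $\ccode\in Y$ is already even, every row of the submatrix $\begin{bmatrix}1&\allone&\allone\\0&I_{k_1-1}&A+2N\end{bmatrix}$ has Euclidean weight divisible by $8$; it remains to verify the same for the new rows $2B$ and for all $\ZZ_4$-combinations involving them. The Euclidean weight of $2v$ for $v\in\{0,1\}^n$ equals $4\wt(v)$, so a single row $2b$ (with $b$ a row of $B$ having $0/1$ entries) is even precisely when $\wt(b)\equiv0\pmod2$, which is $\allone B^t\equiv0\pmod2$, namely (\ref{evenB}). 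For a mixed combination, a codeword of $\ccode_0'$ has the form $x+2y$ where $x\in\ccode$ (lift of a residue word) and $2y$ is a $\ZZ_4$-combination of the rows $2B$, i.e.\ $y\bmod2$ lies in the row space of $B$; one computes $\wt_e(x+2y)=\wt_e(x)+4\,(x\cdot y')$ modulo $8$ for a suitable integer vector $y'$ reducing to $y$, and the cross term $x\cdot y'$ vanishes mod $2$ because the conditions $\ccode_1\subset\ccode_2\subset\ccode_1^\perp$, which give (\ref{soC1inC2}) $AB^t\equiv0\pmod2$ together with $\allone B^t\equiv0\pmod2$ from (\ref{evenB}), force the residue of $x$ to be orthogonal mod $2$ to every row of $B$. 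Hence all codewords of $\ccode_0'$ have Euclidean weight divisible by $8$, so $\ccode_0'$ is even and $\ccode_0'\in Y'$.

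Finally I would prove uniqueness. If $\ccode'\in Y'$ with $\ccode\subset\ccode'$, then applying Lemma~\ref{genmatpure} to $\ccode'$ (with residue $\ccode_1$ and torsion $\ccode_2$) shows $\ZZ_4^{k_2}\begin{bmatrix}0&0&2B\end{bmatrix}\subset\ccode'$; together with $\ccode\subset\ccode'$ this gives $\ccode_0'\subset\ccode'$, and since $|\ccode_0'|=|\pi(\ccode_0')||\iota^{-1}(\ccode_0')|=|\ccode_1||\ccode_2|=|\ccode'|$ by (\ref{cardC}), we get $\ccode_0'=\ccode'$. The main obstacle I anticipate is the evenness verification in the second paragraph: one must handle not only the single new rows $2B$ but arbitrary $\ZZ_4$-linear combinations mixing old and new rows, and confirm that the relevant cross terms die modulo $2$ using precisely (\ref{soC1inC2}) and (\ref{evenB}). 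Everything else is a direct transcription of the proof of Lemma~\ref{unique}.
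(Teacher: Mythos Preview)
Your overall strategy is correct and matches the paper's: invoke Lemma~\ref{unique} to get the unique self-orthogonal $\ccode_0'\in X'$ containing $\ccode$, and then argue that $\ccode_0'$ is in fact even, so $\ccode_0'\in Y'$; uniqueness then comes for free from Lemma~\ref{unique}. The paper's proof is much terser than yours because it appeals to the fact recorded in Section~\ref{Sec:Pre} (from \cite[Lemma~2.2]{BDHO}) that a quaternary self-orthogonal code whose generator matrix has every row of Euclidean weight divisible by $8$ is automatically even. Thus the paper only checks that the new rows $2B$ have Euclidean weight divisible by $8$, which is immediate from (\ref{evenB}), and is done. You instead verify evenness for arbitrary codewords by hand, which is a legitimate and more self-contained route, at the cost of extra computation.

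One point to clean up: your displayed identity $\wt_e(x+2y)=\wt_e(x)+4\,(x\cdot y')\pmod 8$ is not quite right as written. The correct relation is
\[
\wt_e(x+2y)\equiv \wt_e(x)+\wt_e(2y)-4\,(\pi(x)\cdot y)\pmod 8,
\]
so you need all three pieces: $\wt_e(x)\equiv0$ because $x\in\ccode$ is even; $\wt_e(2y)=4\wt(y)\equiv0$ because $y\bmod 2$ lies in the row space of $B$ and (\ref{evenB}) makes every such vector even-weight (not just the individual rows of $B$); and the cross term $\pi(x)\cdot y\equiv0\pmod 2$ by (\ref{soC1inC2}) and (\ref{evenB}) since $\pi(x)\in\ccode_1$ and $y$ lies in $\ccode_2\subset\ccode_1^\perp$. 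You have all the right ingredients scattered through the paragraph, but the formula as stated drops the $\wt_e(2y)$ term. If you simply cite \cite[Lemma~2.2]{BDHO} as the paper does, the whole mixed-combination computation becomes unnecessary.
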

\begin{proof}
The unique code $\ccode'$ given in Lemma~\ref{unique} is even. The matrix $B\bmod{2}$ has even weight rows by (\ref{evenB}) so $2B\bmod{4}$ has rows with Euclidean weights divisible by $8$.
\end{proof}

%(the number of $\ccode$'s contained in a given $\ccode'$)
\begin{lem}\label{MsY}
Let $\ccode'\in Y'$. Then $|\{\ccode\in Y \mid \ccode \subset\ccode'\}|=2^{(k_{1}-1)k_{2}}$.  
\end{lem}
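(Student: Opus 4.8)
The plan is to mimic the proof of Lemma~\ref{Ms}. Fix a generator matrix
\[
\begin{bmatrix}
1&\allone&\allone\\
0&I_{k_1-1}&A+2N\\
0&0&2B
\end{bmatrix}
\]
of $\ccode'\in Y'$, as afforded by Lemma~\ref{genmatpure} (applied with the residue matrix in the block form of~(\ref{C1E})). Every code $\ccode\in Y$ with $\ccode\subset\ccode'$ has, again by Lemma~\ref{genmatpure}, a generator matrix
\[
\begin{bmatrix}
1&\allone&\allone\\
0&I_{k_1-1}&A+2F
\end{bmatrix}
\]
for some $F\in M_{(k_1-1)\times(n-k_1)}(\ZZ)$, uniquely determined modulo $2$; and $\ccode\subset\ccode'$ forces $F\equiv N+MB\pmod 2$ for some $M\in M_{(k_1-1)\times k_2}(\ZZ)$. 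So I would define
\[
\varphi:M_{(k_1-1)\times k_2}(\ZZ_2)\to\{\ccode\in Y\mid\ccode\subset\ccode'\},\qquad
M\bmod 2\mapsto \ZZ_4^{k_1}\begin{bmatrix}1&\allone&\allone\\0&I_{k_1-1}&A+2(N+MB)\end{bmatrix}.
\]
Injectivity is immediate from $\rank(B\bmod 2)=k_2$, exactly as in Lemma~\ref{Ms}, and the computation above shows the map is onto the set of codes in $Y$ contained in $\ccode'$, \emph{provided} each such code actually lies in $Y$, i.e.\ is even; this is the one genuinely new point compared with Lemma~\ref{Ms}.

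The key extra step is therefore to check that $\varphi$ lands in $Y$: for every $M$, the code $\ccode_M:=\ZZ_4^{k_1}[\,1\ \allone\ \allone\,;\,0\ I_{k_1-1}\ A+2(N+MB)\,]$ is a quaternary even code containing $\allone$ with $\pi(\ccode_M)=\iota^{-1}(\ccode_M)=\ccode_1$. It contains $\allone$ and has the right residue and torsion by construction, so the content is evenness. Here I would invoke Lemma~\ref{TypeIISON}: $\ccode_M$ is even iff $N+MB\bmod 2$ lies in $\kernel\alpha\cap\Phi_A^{-1}\!\big(\tfrac12(I+AA^t)+\tfrac14(I+\Diag AA^t)\bmod 2\big)$. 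Since $\ccode'\in Y'$ is even and contains $\ccode_0:=\varphi(0)$, that same Lemma~\ref{TypeIISON} (applied to $\ccode_0$, which is even because it is a subcode of the even code $\ccode'$, being generated by a subset of the rows) gives $N\bmod2\in\kernel\alpha\cap\Phi_A^{-1}(\cdots)$. It then suffices to show $MB\bmod 2$ lies in the corresponding homogeneous space, namely $\kernel\alpha\cap\kernel\Phi_A$: that $\allone(MB)^t\equiv 0\pmod 2$ follows from~(\ref{evenB}) since $\allone B^t\equiv0$, and that $\Phi_A(MB\bmod2)=0$ follows from~(\ref{soC1inC2}), i.e.\ $AB^t\equiv0\pmod2$, because then $A(MB)^t+(MB)A^t=M(AB^t)^t\cdot\!{}^t+\dots\equiv0$ and the diagonal correction vanishes as well. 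Hence $N+MB$ satisfies the criterion of Lemma~\ref{TypeIISON}, so $\ccode_M$ is even.

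With $\varphi$ shown to be a well-defined bijection, the count $|\{\ccode\in Y\mid\ccode\subset\ccode'\}|=|M_{(k_1-1)\times k_2}(\ZZ_2)|=2^{(k_1-1)k_2}$ follows at once. I expect the only real obstacle to be the evenness verification in the middle paragraph — in particular being careful that the affine conditions of Lemma~\ref{TypeIISON} are preserved under adding $MB$, which reduces cleanly to the orthogonality relations~(\ref{soC1inC2}) and~(\ref{evenB}) already available in this section; everything else is a transcription of the argument for Lemma~\ref{Ms}.
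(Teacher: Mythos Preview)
Your proposal is correct and uses exactly the same bijection $\varphi'$ as the paper's proof. The paper is terser and does not spell out the evenness verification; in fact your detour through Lemma~\ref{TypeIISON} is unnecessary, since each row of the generator matrix of $\varphi(M)$ is a $\ZZ_4$-linear combination of rows of the generator matrix of $\ccode'$, so $\varphi(M)\subset\ccode'$ is a subcode of an even code and hence even automatically.
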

\begin{proof}
Let $\begin{bmatrix}
1&\allone&\allone\\
0&I_{k_1-1}&A+2N \\
0&0&2B 
\end{bmatrix}$ be a generator matrix of $\ccode'$. Consider the map
%$$\begin{array}{ccl}
%M_{k_1\times k_2} (\ZZ _{p^2})/ pM_{k_1\times k_2} (\ZZ _{p^2})& \rightarrow & \{ \ccode \in X | \ccode \subset \ccode' \} \\\\
%M \bmod{p} & \mapsto & \begin{bmatrix} I & h(A)+\iota((N + MB)\end{bmatrix}.
%\end{array}$$
$$\begin{array}{ccl}
\varphi' : M_{(k_1-1)\times k_2} (\ZZ_2) & \rightarrow & \{ \ccode \in Y \mid \ccode \subset \ccode' \} \\\\
M \bmod 2 & \mapsto & \ZZ_{4}^{k_1}\begin{bmatrix}
1&\allone&\allone\\
0&I_{k_1-1}&A+2(N+MB) \end{bmatrix}.
\end{array}$$
We claim that this map is bijective. Suppose $M_1,M_2\in M_{(k_1-1)\times k_2} (\ZZ)$ and 
$\varphi'(M_1)= \varphi'(M_2)$. Using same approach as in the proof of Lemma~\ref{Ms}, we can
show that the map $\varphi'$ is well-defined and injective with $M_1\equiv M_2 \pmod{2}$. Next we show that the map is surjective. Suppose $\ccode\in Y$ and $\ccode\subset\ccode'$, then $\ccode=\ZZ_{4}^{k_1}\begin{bmatrix} 1&\allone&\allone\\ 0 & I_{k_1-1} & A+ 2F \end{bmatrix}$ for some matrix $F$. Since $\ccode\subset\ccode'$, $A+2F\equiv A +2N +2MB\pmod{4}$
for some matrix $M$. Then we have $F\equiv N+MB\pmod{2}$, so we conclude that $\varphi'$ is
surjective. Hence, the map $\varphi'$ is bijective. Therefore the number of codes $\ccode\in Y$ contained in $\ccode'$ is $2^{(k_{1}-1)k_2}$. 
%Take $\ZZ_{p^2}^{k_1}\begin{bmatrix} I & A+ pF\end{bmatrix}$ for some matrix $F$. For unknown matrices $G_1$ and $G_2$, we have 
%\begin{align*}
%\begin{bmatrix} I & A+ pF\end{bmatrix} &= \begin{bmatrix} G_1 & G_2\end{bmatrix} \begin{bmatrix} I & A+ pN \\ 0 & pB \end{bmatrix} \\ &=
%\begin{bmatrix} G_1 & G_{1}A + p(G_{1}N + G_2 B) \end{bmatrix}.
%\end{align*}
%Therefore, we have $G_1 = I$ and $A+ pF = A + p(N+G_{2} B)$. This implies $F \equiv N + G_{2}B \bmod{p}$, so we take $M=G_2$. Therefore, the map is a bijection. We have $p^{k_{1}k_2}$ $M$'s, which is also the number of codes $\ccode$.
\end{proof}

\begin{thm}\label{DEallone}
Let $\ccode_1$ and $\ccode_2$ be binary codes of length $n$ where $\ccode_1\subset \ccode_2\subset\ccode_1 ^{\perp}$. If $\ccode_1$ is doubly even, $\allone\in \ccode_1$ and $\dim \ccode_1 = k_1$ and $\dim \ccode_2 =k_1 + k_2$, then the number of quaternary even codes $\ccode$ containing $\allone$ such that $\pi (\ccode) = \ccode_1$ and $\iota ^{-1}(\ccode)=\ccode_2$ is $\displaystyle 2^{(k_{1}-1)(2n-3k_{1}-2-2k_2)/2}$.
\end{thm}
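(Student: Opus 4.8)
The plan is to mimic exactly the double-counting argument used to prove Theorem~\ref{C1subC2}, but with the pair of sets $X,X'$ replaced by the pair $Y,Y'$ defined just before Lemma~\ref{uniqueY}. As in the earlier section, I would first reduce to the case in which $\ccode_1$ and $\ccode_2$ have the standard generator matrices (\ref{C1E}) and (\ref{C2E}); this is legitimate because the quantity we are counting is invariant under equivalence, and the hypotheses ($\ccode_1\subset\ccode_2\subset\ccode_1^\perp$, $\ccode_1$ doubly even, $\allone\in\ccode_1$) guarantee that such generator matrices exist, with $A,B$ satisfying (\ref{soC1})--(\ref{deC1}), (\ref{evenB}) and (\ref{devenC1}). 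Note also that since $\allone\in\ccode_1=\pi(\ccode)$ and $\pi$ maps $\ccode$ onto $\ccode_1$, any quaternary code $\ccode$ with residue $\ccode_1$ automatically contains a vector all of whose coordinates are $\pm1$; when $\ccode$ is moreover even this is the usual $\allone$-type condition, so the set being counted is precisely $|Y'|$.

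The core of the argument is the identity
\begin{align*}
|Y'|
&=2^{-(k_1-1)k_2}\sum_{\ccode'\in Y'}|\{\ccode\in Y\mid\ccode\subset\ccode'\}|\\
&=2^{-(k_1-1)k_2}\sum_{\ccode\in Y}|\{\ccode'\in Y'\mid\ccode\subset\ccode'\}|\\
&=2^{-(k_1-1)k_2}|Y|,
\end{align*}
where the first equality is Lemma~\ref{MsY}, the middle equality is just exchanging the order of summation over the incidence relation $\ccode\subset\ccode'$, and the last equality is Lemma~\ref{uniqueY}, which says each $\ccode\in Y$ lies in exactly one $\ccode'\in Y'$. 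Plugging in $|Y|=2^{(k_1-1)(2n-3k_1-2)/2}$ from Lemma~\ref{withone} and simplifying the exponent $\tfrac{(k_1-1)(2n-3k_1-2)}{2}-(k_1-1)k_2=\tfrac{(k_1-1)(2n-3k_1-2-2k_2)}{2}$ gives the claimed count.

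I do not expect any genuine obstacle here: all three ingredients (Lemmas~\ref{withone}, \ref{uniqueY}, \ref{MsY}) have already been established, and the proof is a purely formal assembly identical in shape to that of Theorem~\ref{C1subC2}. The only points requiring a word of care are (i) checking that, after the reduction to standard form, the sets $Y,Y'$ in the statement coincide with those defined via (\ref{C1E})--(\ref{C2E}) — which follows because "quaternary even code containing $\allone$ with prescribed residue and torsion" is an equivalence-invariant notion and the hypotheses ensure a representative in standard form exists — and (ii) the arithmetic simplification of the exponent. Neither is substantive.
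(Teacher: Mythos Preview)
Your proposal is correct and follows essentially the same approach as the paper's own proof: reduce to the standard generator matrices (\ref{C1E})--(\ref{C2E}), then apply the double-counting identity $|Y'|=2^{-(k_1-1)k_2}|Y|$ via Lemmas~\ref{uniqueY} and~\ref{MsY}, and finish with the value of $|Y|$ from Lemma~\ref{withone}. Your aside about $\pm1$ vectors is unnecessary here (the set being counted is $Y'$ by definition), but it does no harm.
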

\begin{proof}
We may assume without loss of generality that $\ccode_1$ and $\ccode_2$ are codes with generator matrices given by (\ref{C1E}) and (\ref{C2E}), respectively.
By Lemma~\ref{uniqueY} and Lemma~\ref{MsY}, we have
\begin{align*}
|Y'|&=
2^{-(k_1-1)k_2}\sum_{\ccode'\in Y'}
|\{\ccode\in Y\mid\ccode\subset\ccode'\}|
\\ &=
2^{-(k_1-1)k_2}\sum_{\ccode\in Y}
|\{\ccode'\in Y'\mid\ccode\subset\ccode'\}|
\\ &=
2^{-(k_1-1)k_2}|Y|.
\end{align*}
The result then follows from Lemma~\ref{withone}.
\end{proof}
%\begin{lem}\label{withpm1}
%The number of free even codes $\ccode\subseteq \ZZ_{4}^n$ containing an element of $\{\pm 1\}^n$ such that $\pi (\ccode) = \iota ^{-1}(\ccode)=\ccode_1$ is $2^{(n-k_1)+(k_{1}-1)(2n-3k_{1}-2)/2}$. 
%\end{lem}
\begin{thm}\label{doublyeven}
Let $\ccode_1$ and $\ccode_2$ be binary codes of length $n$ where $\ccode_1\subset \ccode_2\subset\ccode_1 ^{\perp}$. If $\ccode_1$ is doubly even, $\allone\in \ccode_1$, $\dim \ccode_1 = k_1$ and $\dim \ccode_2 =k_1 + k_2$, then the number of quaternary even codes $\ccode$ containing an element of $\{\pm 1\}^n$ such that $\pi (\ccode) = \ccode_1$ and $\iota ^{-1}(\ccode)=\ccode_2$ is $\displaystyle 2^{(n-k_1-k_2)+(k_{1}-1)(2n-3k_{1}-2-2k_2)/2}$ . 
\begin{proof}
Consider the sets 
\begin{align*}
Z &=\{\pm 1\}^n\;\mbox{and} \\
U &=\{ \ccode \mid \ccode\; \mbox{is quaternary even},\; \pi (\ccode)=\ccode_1,\;\iota ^{-1}(\ccode) = \ccode_2,\; Z \cap \ccode \neq
\emptyset \}.
\end{align*}
If $\ccode\in U$, then there exists $z\in Z \cap \ccode $. Then $Z\cap \ccode = \{ z + \iota(x) \mid x \in \ccode_2 \}$, hence $|Z\cap \ccode | = |\ccode_2|$.
Therefore, we have
\begin{align*}
2^{k_1 + k_2} |U| &= |\ccode_2||U| 
\\ &=
\sum_{\ccode \in U} |Z \cap \ccode|
\\ &=
%|\{(\ccode,z) | \ccode\in U, z \in Z\cap \ccode \}| 
%\\ &=
\sum_{z \in Z}|\{\ccode \in U \mid z\in \ccode \}|
\\ &=
\sum_{z \in Z}|\{\ccode \in U \mid \allone\in \ccode \}|
\\ &=
|Z|2^{(k_{1}-1)(2n-3k_{1}-2-2k_2)/2} && \text{(by Theorem~\ref{DEallone})} 
\\ &= 2^{n+(k_{1}-1)(2n-3k_{1}-2-2k_2)/2}.
\end{align*}
\end{proof}
\end{thm}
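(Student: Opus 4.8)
The plan is to count the set
$U=\{\ccode\mid\ccode\text{ quaternary even},\ \pi(\ccode)=\ccode_1,\ \iota^{-1}(\ccode)=\ccode_2,\ Z\cap\ccode\neq\emptyset\}$ (with $Z=\{\pm1\}^n$) by double counting the incident pairs $(\ccode,z)$ with $\ccode\in U$ and $z\in Z\cap\ccode$, and then to reduce the resulting count to Theorem~\ref{DEallone}. The point of the double count is that summing over $\ccode$ records the size of each fiber $Z\cap\ccode$, while summing over $z$ records, for each sign pattern $z$, the number of codes in $U$ through $z$ — and the latter should be independent of $z$ by a sign-change symmetry.

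First I would pin down the structure of $Z\cap\ccode$ for a fixed $\ccode\in U$. Choose $z\in Z\cap\ccode$. If $z'\in Z\cap\ccode$ is another such vector, then each coordinate of $z-z'$ lies in $\{0,\pm2\}$, so $z-z'\in 2\ZZ_4^n=\iota(\ZZ_2^n)$; together with $z-z'\in\ccode$ this gives $z-z'\in\ccode\cap\iota(\ZZ_2^n)=\iota(\iota^{-1}(\ccode))=\iota(\ccode_2)$. Conversely, for every $x\in\ccode_2$ the vector $z+\iota(x)$ lies in $\ccode$ and still has all coordinates in $\{\pm1\}$, since adding $0$ or $2$ to $\pm1$ in $\ZZ_4$ stays in $\{\pm1\}$. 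Hence $Z\cap\ccode=z+\iota(\ccode_2)$ and $|Z\cap\ccode|=|\ccode_2|=2^{k_1+k_2}$, so the sum of the fiber sizes over $\ccode\in U$ equals $2^{k_1+k_2}|U|$.

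Next I would compute the same sum by summing over $z\in Z$ the quantity $|\{\ccode\in U\mid z\in\ccode\}|$. For this I would use the invertible diagonal matrix $D$ over $\ZZ_4$ whose $i$-th diagonal entry is $z_i$: right multiplication by $D$ permutes codes, preserves Euclidean weights (hence evenness), and satisfies $D\equiv I\pmod 2$, so it preserves both the residue $\ccode_1$ and the torsion $\ccode_2$; moreover $\allone D=z$, so $\ccode\mapsto\ccode D$ is a bijection from $\{\ccode\in U\mid z\in\ccode\}$ onto $\{\ccode\mid\ccode\text{ quaternary even},\ \pi(\ccode)=\ccode_1,\ \iota^{-1}(\ccode)=\ccode_2,\ \allone\in\ccode\}$. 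By Theorem~\ref{DEallone} the latter has cardinality $2^{(k_1-1)(2n-3k_1-2-2k_2)/2}$, independently of $z$. Equating the two evaluations of the double count yields $2^{k_1+k_2}|U|=|Z|\cdot2^{(k_1-1)(2n-3k_1-2-2k_2)/2}=2^{n+(k_1-1)(2n-3k_1-2-2k_2)/2}$, whence the claimed formula for $|U|$.

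The main obstacle I anticipate is the pair of verifications underlying the two key reductions: (i) that $Z\cap\ccode$, once nonempty, is genuinely a full coset of $\iota(\ccode_2)$ of size exactly $|\ccode_2|$ rather than a proper subset, which is where the observation $\ccode\cap\iota(\ZZ_2^n)=\iota(\ccode_2)$ is essential; and (ii) that right multiplication by $D$ really leaves the defining data of $U$ intact — in particular that the residue and torsion are unchanged, which hinges on $D\bmod 2=I$, and that evenness survives because $\wt_e(\pm c_i)=\wt_e(c_i)$. Once these are in place, the remainder is a routine double-counting bookkeeping that invokes Theorem~\ref{DEallone}.
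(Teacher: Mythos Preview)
Your proposal is correct and follows essentially the same route as the paper's own proof: a double count of the incident pairs $(\ccode,z)$ with $z\in Z\cap\ccode$, the identification $Z\cap\ccode=z+\iota(\ccode_2)$, and the reduction to Theorem~\ref{DEallone} via the sign-change symmetry. If anything, you are more explicit than the paper: you spell out both inclusions in $Z\cap\ccode=z+\iota(\ccode_2)$ and you justify the equality $|\{\ccode\in U\mid z\in\ccode\}|=|\{\ccode\in U\mid\allone\in\ccode\}|$ by exhibiting the diagonal involution $D=\mathrm{diag}(z_1,\dots,z_n)$, a step the paper passes over in a single unexplained line.
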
 

\section{Main Results}

Let $\sigma(n,k_1)$ denote the number of distinct doubly even binary codes of length $n$ and dimension $k_1$, and let $\sigma_{p}(n,k_1)$ be the number of distinct self-orthogonal $p$-ary codes, with $p$ an odd prime, of length $n$ and dimension $k_1$.  Let $\sigma_{\allone}(n,k_1)$ be the number of distinct doubly even binary codes of length $n$ and dimension $k_1$ containing $\allone$. The value of $\sigma _{p}(n,k_1)$ is given in \cite{iso,Golay}. The values of $\sigma(n,k_1)$ and $\sigma _{\allone}(n,k_1)$ are given in \cite{Gaborit}. All of these values can also be derived from \cite{Chaudhuri}. 
For $ k\leq n$, we define the Gaussian coefficient $\begin{bmatrix} n \\ k \end{bmatrix}_{p}$ as
$$\begin{bmatrix} n \\ k \end{bmatrix}_{p}=\frac{(p^{n}-1)(p^{n}-p)\cdots (p^{n}-p^{k-1})}{(p^{k}-1) (p^{k}-p)\cdots (p^{k}-p^{k-1})}.$$

%(mass formula)
\begin{cor}\label{cor1}
The number of distinct self-orthogonal codes of length $n$ over $\ZZ_{p^2}$ of type $\{k_1, k_2\}$ is
\begin{equation}\label{main_p_odd}
\displaystyle M_{p^2}(k_1,k_2) = \sigma _{p}(n,k_1)\:\begin{bmatrix} n-2k_1 \\ k_2 \end{bmatrix}_{p} \:\displaystyle p^{k_1(2n-3k_1-1-2k_2)/2},
\end{equation}
for odd primes $p$, and
\begin{equation}\label{main_p=2}
\displaystyle M_{4}(k_1,k_2) = \sigma(n,k_1)\: \begin{bmatrix} n-2k_1 \\ k_2 \end{bmatrix}_{2} \:\displaystyle 2^{k_1(2n-3k_1+1-2k_2)/2},
\end{equation}
for $p=2$.   
\end{cor}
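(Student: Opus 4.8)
The plan is to count self-orthogonal codes over $\ZZ_{p^2}$ of type $\{k_1,k_2\}$ by organizing them according to their residue and torsion codes. A self-orthogonal code $\ccode$ of type $\{k_1,k_2\}$ has a residue code $\pi(\ccode)=\ccode_1$ of dimension $k_1$ and torsion code $\iota^{-1}(\ccode)=\ccode_2$ of dimension $k_1+k_2$, and by the discussion preceding this section (and Lemma~\ref{DEres} in the case $p=2$), these satisfy $\ccode_1\subset\ccode_2\subset\ccode_1^\perp$ with $\ccode_1$ doubly even when $p=2$. Conversely, every such pair $(\ccode_1,\ccode_2)$ arises. Thus the total count is
\[
M_{p^2}(k_1,k_2)=\sum_{(\ccode_1,\ccode_2)}\bigl|\{\ccode\mid \pi(\ccode)=\ccode_1,\ \iota^{-1}(\ccode)=\ccode_2,\ \ccode\text{ self-orthogonal}\}\bigr|,
\]
where the sum runs over all admissible pairs. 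Theorem~\ref{C1subC2} tells us that the inner count depends only on $n$, $k_1$, $k_2$ and not on the particular pair, so it factors out of the sum.

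Next I would count the number of admissible pairs $(\ccode_1,\ccode_2)$. This splits as a product: first choose $\ccode_1$, then choose $\ccode_2$ with $\ccode_1\subset\ccode_2\subset\ccode_1^\perp$ and $\dim\ccode_2=k_1+k_2$. The number of choices for $\ccode_1$ is $\sigma_p(n,k_1)$ for odd $p$ (self-orthogonal $p$-ary codes of dimension $k_1$) and $\sigma(n,k_1)$ for $p=2$ (doubly even binary codes of dimension $k_1$), by definition of these quantities. For the second factor, note that since $\ccode_1$ is self-orthogonal we have $\ccode_1\subseteq\ccode_1^\perp$ and $\dim(\ccode_1^\perp/\ccode_1)=n-2k_1$; choosing $\ccode_2$ amounts to choosing a $k_2$-dimensional subspace of the $\ZZ_p$-vector space $\ccode_1^\perp/\ccode_1$ of dimension $n-2k_1$, and the number of such subspaces is the Gaussian coefficient $\begin{bmatrix} n-2k_1\\ k_2\end{bmatrix}_p$.

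Putting the pieces together, for odd $p$,
\[
M_{p^2}(k_1,k_2)=\sigma_p(n,k_1)\begin{bmatrix} n-2k_1\\ k_2\end{bmatrix}_p\, p^{k_1(2n-3k_1-1-2k_2)/2},
\]
and for $p=2$,
\[
M_4(k_1,k_2)=\sigma(n,k_1)\begin{bmatrix} n-2k_1\\ k_2\end{bmatrix}_2\, 2^{k_1(2n-3k_1+1-2k_2)/2},
\]
using the respective exponents from Theorem~\ref{C1subC2}. These are exactly \eqref{main_p_odd} and \eqref{main_p=2}.

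The only genuinely substantive input has already been established in Theorem~\ref{C1subC2}; the remaining obstacle is mostly bookkeeping, namely making sure the correspondence between codes of type $\{k_1,k_2\}$ and admissible pairs $(\ccode_1,\ccode_2)$ is exactly one level deep (no further choices or multiplicities hidden in the passage from a pair to the set of codes lying over it) and that the case $p=2$ correctly incorporates the doubly-even constraint on $\ccode_1$ coming from Lemma~\ref{DEres}. One should also double-check that the definition of type $\{k_1,k_2\}$ forces $\dim\ccode_2 = k_1+k_2$ and $\dim\ccode_1=k_1$ precisely, so that no degenerate overlaps occur; once these are confirmed the product decomposition is immediate.
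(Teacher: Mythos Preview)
Your proposal is correct and follows essentially the same approach as the paper's own proof: partition the self-orthogonal codes of type $\{k_1,k_2\}$ by their residue/torsion pair $(\ccode_1,\ccode_2)$, count the admissible $\ccode_1$ (giving $\sigma_p(n,k_1)$ or $\sigma(n,k_1)$), count the $\ccode_2$ lying between $\ccode_1$ and $\ccode_1^\perp$ (the Gaussian coefficient), and invoke Theorem~\ref{C1subC2} for the fiber size. The paper compresses this into two sentences, whereas you spell out the bookkeeping (including the doubly-even constraint from Lemma~\ref{DEres} and the identification of $\ccode_2$ with a subspace of $\ccode_1^\perp/\ccode_1$), but the argument is the same.
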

\begin{proof}
Given a self-orthogonal $[n,k_1]$ code $\ccode_1$, we have $\begin{bmatrix} n-2k_1 \\ k_2 \end{bmatrix}_{p}$ codes $\ccode_2$ such that $\ccode_1\subseteq\ccode_2\subseteq\ccode_1 ^{\perp}$. The result follows from Theorem~\ref{C1subC2}. 
\end{proof}
As an example, we consider the case $n=4$, $k_1=k_2=1$, and $p=3$. Let $\ccode_1$, $\ccode_2$, $\ccode_3$, $\ccode_4$ be the self-orthogonal codes over $\ZZ_9$ of type $\{1, 1\}$ with generator matrices
$$
 \begin{bmatrix} 1&1&4&0 \\
                        0&3&6&0 \end{bmatrix},
    		\begin{bmatrix} 1&1&4&3 \\
    		                0&3&6&0 \end{bmatrix},
    		\begin{bmatrix} 1&1&4&6 \\
                        0&3&6&3 \end{bmatrix},
    		\begin{bmatrix} 1&7&7&0 \\
                        0&0&0&3 \end{bmatrix},$$
respectively. The orders of their automorphism groups are $24,12,4$ and $8$, respectively. Thus, we have
\begin{align*}
\displaystyle 
\sum_{i=1}^{4}\frac{|E|}{|\Aut\ccode_i|} &= \frac{2^4 4!}{24} + \frac{2^4 4!}{12} + \frac{2^4 4!}{4} + \frac{2^4 4!}{8}
&= 16 + 32 + 96 + 48 =192. 
\end{align*}
Using Corollary~\ref{cor1}, we have
\begin{align*}
\displaystyle M_{9}(1,1) &= \sigma _{3}(4,1)\:\begin{bmatrix} 4-2 \\ 1 \end{bmatrix}_{3} \:\displaystyle 3^{3-1-1}
\\ &=
16 \cdot 4\cdot 3 = 192.
\end{align*}
This implies that $\{ \ccode_1, \ccode_2, \ccode_3, \ccode_4 \}$ is a complete set of representatives for equivalence classes of self-orthogonal codes of length $4$ and type $\{1, 1\}$ over $\ZZ_9$.

As a consequence of Corollary~\ref{cor1}, we have
\begin{cor}\label{cor2}
The number of distinct self-dual codes over $\ZZ_{p^2}$ of length $n$ is
\begin{equation}\label{sd}
\displaystyle\sum_{0\leq k_1\leq \left\lfloor \frac{n}{2}
\right\rfloor} M_{p^2}(k_1,n-2k_1).\end{equation}
\end{cor}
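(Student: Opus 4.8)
The plan is to show that every self-dual code over $\ZZ_{p^2}$ is counted exactly once by the sum \eqref{sd}, and that each term $M_{p^2}(k_1,n-2k_1)$ correctly counts the self-dual codes with residue of dimension $k_1$. The key observation is that a self-orthogonal code $\ccode$ is self-dual if and only if $|\ccode|=p^n$, which by \eqref{cardC} means $|\pi(\ccode)|\,|\iota^{-1}(\ccode)|=p^n$. Writing $k_1=\dim\pi(\ccode)$ and $k_1+k_2=\dim\iota^{-1}(\ccode)$, this is the condition $2k_1+k_2=n$, i.e. $k_2=n-2k_1$. Since $\ccode$ self-orthogonal forces $\pi(\ccode)\subseteq\pi(\ccode)^\perp$, hence $k_1\le n/2$, the type of a self-dual code is exactly $\{k_1,n-2k_1\}$ for some $k_1$ with $0\le k_1\le\lfloor n/2\rfloor$.

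The first step is to verify that self-duality is equivalent to the type being $\{k_1,n-2k_1\}$. One inclusion is immediate: if $\ccode$ is self-dual then $\ccode=\ccode^\perp$ gives $|\ccode|^2=|\ZZ_{p^2}^n|=p^{2n}$, so $|\ccode|=p^n$, and \eqref{cardC} together with $|\pi(\ccode)|=p^{k_1}$, $|\iota^{-1}(\ccode)|=p^{k_1+k_2}$ yields $2k_1+k_2=n$. Conversely, if $\ccode$ is self-orthogonal of type $\{k_1,n-2k_1\}$, then $|\ccode|=p^{2k_1+k_2}=p^n=\sqrt{|\ZZ_{p^2}^n|}$; combined with $\ccode\subseteq\ccode^\perp$ and $|\ccode|\,|\ccode^\perp|=p^{2n}$ (which holds for any code over $\ZZ_{p^2}$ since $\ZZ_{p^2}$ is a finite chain ring, so $|\ccode^\perp|=p^{2n}/|\ccode|$), this forces $|\ccode^\perp|=p^n=|\ccode|$, hence $\ccode=\ccode^\perp$.

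The second step is bookkeeping: the self-dual codes of length $n$ over $\ZZ_{p^2}$ are partitioned according to the dimension $k_1$ of their residue code, which ranges over $0\le k_1\le\lfloor n/2\rfloor$ by the self-orthogonality of $\pi(\ccode)$. For each fixed $k_1$, the self-dual codes with residue dimension $k_1$ are precisely the self-orthogonal codes of type $\{k_1,n-2k_1\}$ by Step~1, and their number is $M_{p^2}(k_1,n-2k_1)$ by Corollary~\ref{cor1}. Summing over $k_1$ gives \eqref{sd}.

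The only point requiring a little care is the fact that $|\ccode^\perp|=p^{2n}/|\ccode|$ for an arbitrary code $\ccode$ over $\ZZ_{p^2}$; this is standard for codes over finite chain rings and can be seen directly from the generator-matrix normal form $\begin{bmatrix} I_{k_1}&A\\0&pB\end{bmatrix}$ exhibited in Section~\ref{Sec:Pre}, by computing a generator matrix for the dual in the analogous form. I expect this to be the main (minor) obstacle; the rest of the argument is a straightforward partition-and-count using Corollary~\ref{cor1}. Alternatively, one can bypass it entirely by noting that $\pi(\ccode)$ self-orthogonal already gives $k_1\le n/2$, and that for a self-orthogonal $\ccode$ of type $\{k_1,n-2k_1\}$ one has $\ccode\subseteq\ccode^\perp$ with $|\ccode|=p^n$, while $\ccode^\perp\supseteq\ccode$ is itself a code whose type must satisfy the same size constraint, forcing equality.
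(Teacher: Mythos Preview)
Your proof is correct and follows exactly the reasoning the paper intends: the paper states Corollary~\ref{cor2} without proof, merely as ``a consequence of Corollary~\ref{cor1},'' and your argument makes explicit precisely that consequence---self-dual codes are the self-orthogonal codes of type $\{k_1,n-2k_1\}$, so summing $M_{p^2}(k_1,n-2k_1)$ over $k_1$ counts them all. The level of detail you supply (the use of \eqref{cardC} and the identity $|\ccode|\,|\ccode^\perp|=p^{2n}$) is more than the paper offers but entirely in line with its approach.
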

\noindent Corollary~\ref{cor2} agrees with the results in \cite{Gaborit}, \cite{Z9}, and \cite{Zp2}.
\begin{cor}\label{cor3}
The number of distinct quaternary even codes of length $n$ containing $\allone$, of type $\{k_1, k_2\}$ is  
\begin{equation}\label{main_1}
\displaystyle D_{\allone} (k_1,k_2)= \sigma _{\allone}(n,k_1)\: \begin{bmatrix} n-2k_1 \\ k_2 \end{bmatrix}_{2}\:\displaystyle 2^{(k_{1}-1)(2n-3k_{1}-2-2k_2)/2} ,
\end{equation}
and the number of distinct quaternary even codes of length $n$ containing an element of $\{\pm 1\}^n$, of type $\{k_1, k_2\}$ is
\begin{equation}\label{main_pm1}
\displaystyle D (k_1,k_2) = \sigma _{\allone}(n,k_1)\: \begin{bmatrix} n-2k_1 \\ k_2 \end{bmatrix}_{2} \:\displaystyle 2^{(n-k_1-k_2)+(k_{1}-1)(2n-3k_{1}-2-2k_2)/2}.
\end{equation} 
\end{cor}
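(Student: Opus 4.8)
The plan is to derive Corollary~\ref{cor3} from the two structure theorems just proved—Theorem~\ref{DEallone} for the $\allone$-containing case and Theorem~\ref{doublyeven} for the $\{\pm1\}^n$-containing case—in exactly the same way that Corollary~\ref{cor1} was derived from Theorem~\ref{C1subC2}. The key observation is that both counts in the corollary are obtained by summing the ``local'' counts (codes over $\ZZ_4$ with a \emph{fixed} residue $\ccode_1$ and torsion $\ccode_2$) over all admissible pairs $(\ccode_1,\ccode_2)$, and that this sum factors as a product because the local count is independent of the particular choice of $\ccode_1$ and $\ccode_2$.

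First I would fix the residue dimension $k_1$ and torsion dimension $k_1+k_2$. By definition $\sigma_{\allone}(n,k_1)$ is the number of doubly even binary codes $\ccode_1$ of length $n$ and dimension $k_1$ containing $\allone$; for each such $\ccode_1$, Lemma~\ref{DEres} (applied in reverse, via the hypothesis of Theorem~\ref{DEallone}) tells us the admissible torsion codes $\ccode_2$ are exactly those with $\ccode_1\subset\ccode_2\subset\ccode_1^\perp$ and $\dim\ccode_2=k_1+k_2$. The number of such $\ccode_2$ is the number of $k_2$-dimensional subspaces of the $(n-2k_1)$-dimensional space $\ccode_1^\perp/\ccode_1$, which is the Gaussian coefficient $\begin{bmatrix} n-2k_1 \\ k_2 \end{bmatrix}_{2}$. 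Next, for each admissible pair $(\ccode_1,\ccode_2)$, Theorem~\ref{DEallone} gives exactly $2^{(k_1-1)(2n-3k_1-2-2k_2)/2}$ quaternary even codes $\ccode$ containing $\allone$ with that residue and torsion; crucially this number depends only on $n,k_1,k_2$ and not on the pair. Multiplying the three factors gives $D_{\allone}(k_1,k_2)$, which is \eqref{main_1}. The argument for \eqref{main_pm1} is verbatim the same, replacing the appeal to Theorem~\ref{DEallone} by an appeal to Theorem~\ref{doublyeven}, whose local count is $2^{(n-k_1-k_2)+(k_1-1)(2n-3k_1-2-2k_2)/2}$.

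There is one bookkeeping point I would want to state carefully: in Theorem~\ref{DEallone} and Theorem~\ref{doublyeven} the codes $\ccode_1,\ccode_2$ are taken with the specific generator matrices \eqref{C1E} and \eqref{C2E}, but by equivalence (applying a $(1,-1)$ monomial matrix, as in the ``without loss of generality'' step in those proofs) any doubly even binary $\ccode_1\ni\allone$ of dimension $k_1$ with $\ccode_1\subset\ccode_2\subset\ccode_1^\perp$ can be brought into that form while preserving all the relevant properties of the quaternary codes sitting over it. So the count attached to an arbitrary admissible pair equals the count attached to the standard pair. I would also note that $\allone\in\ccode_1$ is forced in the $\{\pm1\}^n$ case: if $\ccode$ is quaternary even and contains some $z\in\{\pm1\}^n$, then $\pi(z)=\allone$, so $\allone\in\pi(\ccode)=\ccode_1$; hence restricting to $\ccode_1$ containing $\allone$ loses nothing, which is why $\sigma_{\allone}(n,k_1)$ (rather than some larger quantity) is the right leading factor in \eqref{main_pm1} as well.

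The main obstacle is not conceptual—it is making the ``independence of the pair'' claim airtight. One must check that the reduction to the standard generator matrices \eqref{C1E}, \eqref{C2E} is legitimate: the conditions \eqref{soC1}–\eqref{deC1}, \eqref{evenB}, \eqref{devenC1} that were used as hypotheses throughout Section~5 must hold automatically for \emph{any} admissible pair after the equivalence transformation, and the bijections constructed there (via Lemma~\ref{TypeIISON}, Lemma~\ref{withone}, Theorem~\ref{DEallone}) must be seen to depend only on the isomorphism type of the configuration. Once that is granted, the corollary is a one-line product formula. I would therefore present the proof as: fix $\ccode_1$ (there are $\sigma_{\allone}(n,k_1)$ choices), then $\ccode_2$ (there are $\begin{bmatrix} n-2k_1 \\ k_2 \end{bmatrix}_{2}$ choices by the subspace count in $\ccode_1^\perp/\ccode_1$), then invoke Theorem~\ref{DEallone} (resp.\ Theorem~\ref{doublyeven}) for the innermost factor, and multiply.
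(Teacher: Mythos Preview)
Your proposal is correct and follows essentially the same approach as the paper: choose $\ccode_1$ (giving the factor $\sigma_{\allone}(n,k_1)$), choose $\ccode_2$ inside $\ccode_1^\perp/\ccode_1$ (giving the Gaussian coefficient), and invoke Theorem~\ref{DEallone} or Theorem~\ref{doublyeven} for the last factor. Your worry about ``independence of the pair'' is already absorbed into the \emph{statements} of Theorems~\ref{DEallone} and~\ref{doublyeven}, which are phrased for arbitrary $\ccode_1,\ccode_2$ (the reduction to the standard generators \eqref{C1E}--\eqref{C2E} is done inside their proofs), so at the level of the corollary you can simply cite them without re-justifying the WLOG step.
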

\begin{proof}
Given a binary doubly even $[n,k_1]$ code $\ccode_1$ containing $\allone$, we have $\begin{bmatrix} n-2k_1 \\ k_2 \end{bmatrix}_{2}$ codes $\ccode_2$ such that $\ccode_1\subseteq\ccode_2\subseteq\ccode_1 ^{\perp}$. Thus, (\ref{main_1}) follows from Theorem~\ref{DEallone}. Similarly, (\ref{main_pm1}) follows from Theorem~\ref{doublyeven}. 
\end{proof}
And as a consequence of Corollary~\ref{cor3}, we have
\begin{cor}\label{cor4}
The number of distinct quaternary Type II codes of length $n$ containing $\allone$ is 
\begin{equation}\label{TypeIIone}
\displaystyle \displaystyle\sum_{0\leq k_1\leq \frac{n}{2}}
D_{\allone}(k_1,n-2k_1),\end{equation}
and the number of distinct quaternary Type II codes of length $n$ containing an element of $\{\pm 1\}^n$ is
\begin{equation}\label{TypeIIpm}
\displaystyle \displaystyle\sum_{0\leq k_1\leq \frac{n}{2}}
D(k_1,n-2k_1).\end{equation}
\end{cor}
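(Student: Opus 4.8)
The final statement to prove is Corollary~\ref{cor4}, which asserts the mass formulas for quaternary Type~II codes (in both variants) by summing the type-$\{k_1,k_2\}$ counts in Corollary~\ref{cor3} over the self-dual constraint $k_2 = n - 2k_1$.

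\medskip

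The plan is to observe that a quaternary Type~II code is precisely a quaternary even code that is self-dual, and that self-duality of a code $\ccode$ over $\ZZ_4$ forces the torsion code to be as large as possible relative to the residue code, namely $\iota^{-1}(\ccode) = \pi(\ccode)^\perp$. Concretely, for a code of type $\{k_1,k_2\}$ we have $|\ccode| = p^{2k_1+k_2}$ by \eqref{cardC} (with $p=2$), and $|\ccode^\perp| = 4^n/|\ccode| = 2^{2n - 2k_1 - k_2}$; self-duality $|\ccode| = |\ccode^\perp|$ therefore gives $k_2 = n - 2k_1$, i.e. $\dim\ccode_2 = k_1 + k_2 = n - k_1$. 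Since the residue code $\ccode_1 = \pi(\ccode)$ of a quaternary even code is doubly even and contains $\allone$ (respectively, for the $\{\pm1\}^n$ variant one reduces to the $\allone$-containing case exactly as in the proof of Theorem~\ref{doublyeven}), and since $\ccode_1 \subset \ccode_2 \subset \ccode_1^\perp$ with $\dim\ccode_1 + \dim\ccode_2 = n$ forces $\ccode_2 = \ccode_1^\perp$, every Type~II code arises as a code counted by $D_{\allone}(k_1, n - 2k_1)$ (resp. $D(k_1, n - 2k_1)$) for a unique value of $k_1$ in the range $0 \le k_1 \le n/2$.

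\medskip

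So the key steps, in order, are: (i) recall from Section~\ref{Sec:Pre} that a quaternary Type~II code is an even self-dual code, and that by Lemma~\ref{DEres} its residue code is doubly even with $\pi(\ccode)\subset\iota^{-1}(\ccode)\subset\pi(\ccode)^\perp$; (ii) use the cardinality identity \eqref{cardC} together with $|\ccode|^2 = 4^n$ to show that self-duality is equivalent, for a code of type $\{k_1,k_2\}$, to $k_2 = n - 2k_1$, and simultaneously to $\iota^{-1}(\ccode) = \pi(\ccode)^\perp$; (iii) note that conversely any quaternary even code $\ccode$ with $\pi(\ccode)$ doubly even, $\allone\in\pi(\ccode)$, and $\iota^{-1}(\ccode) = \pi(\ccode)^\perp$ is Type~II and contains $\allone$ (and likewise for the $\{\pm1\}^n$ variant, it contains an element of $\{\pm1\}^n$); (iv) conclude that the set of Type~II codes of length $n$ containing $\allone$ is the disjoint union, over $0 \le k_1 \le n/2$, of the sets counted in Corollary~\ref{cor3} with $k_2 = n - 2k_1$, giving \eqref{TypeIIone}, and identically \eqref{TypeIIpm} from the second formula of Corollary~\ref{cor3}.

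\medskip

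The main obstacle is essentially bookkeeping rather than a genuine difficulty: one must be careful that the hypotheses of Corollary~\ref{cor3} (namely that $\ccode_1$ is doubly even and contains $\allone$, and $\dim\ccode_1 \le n/2$) are automatically met by the residue code of any quaternary Type~II code containing $\allone$, and that no Type~II code is counted twice or omitted — which follows because a code's type $\{k_1,k_2\}$ is an invariant and $k_1$ determines $k_2 = n - 2k_1$ in the self-dual case. One should also verify that the upper summation limit $n/2$ is the correct range, i.e. that $k_1 \le n/2$ is forced by $\ccode_1 \subset \ccode_1^\perp$; this is immediate since a self-orthogonal binary $[n,k_1]$ code satisfies $k_1 \le n - k_1$. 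With these points checked, the corollary is a direct partition argument over the possible residue dimensions.
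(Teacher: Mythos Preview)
Your proposal is correct and follows the same line as the paper, which in fact gives no explicit proof at all: the paper simply states Corollary~\ref{cor4} as ``a consequence of Corollary~\ref{cor3},'' relying on the evident observation (used identically for Corollary~\ref{cor2}) that self-duality of a code of type $\{k_1,k_2\}$ is equivalent to $k_2=n-2k_1$. Your write-up supplies exactly the details the paper leaves implicit; the only minor imprecision is in step~(iii), where you write ``$\allone\in\pi(\ccode)$'' as a hypothesis --- this alone does not force $\allone\in\ccode$, only that $\ccode$ meets $\{\pm1\}^n$ --- but since $D_{\allone}(k_1,k_2)$ is by definition the count of even codes with $\allone\in\ccode$, the converse direction is immediate anyway and the argument stands.
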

\noindent The formula (\ref{TypeIIpm}) agrees with \cite[Theorem 6]{Gaborit}.

%\section*{Acknowledgements}

%%%%%%%%%%%%%%%%%%%%%%%%%%%%%%%%%%%%%%%%%%%%%


\begin{thebibliography}{99}

\bibitem{Z9} J.M.P. Balmaceda, R.A.L. Betty and F.R. Nemenzo, On the number
of distinct self-dual codes over $\ZZ_{9}$, Matimyas Matematika, 26 (2003), 9--17.

\bibitem{Zp2} J.M.P. Balmaceda, R.A.L. Betty and F.R. Nemenzo, {Mass formula
for self-dual codes over $\ZZ_{p^2}$,} 
{Discrete Math.}, {308} (2008), 2984--3002 .

\bibitem{BDHO} E.~Bannai, S.T.~Dougherty, M.~Harada and M.~Oura,
{Type~II codes, even unimodular lattices and invariant rings,}
IEEE Trans. Inform. Theory, 45 (1999), 257--269.

%\bibitem{Calderbank-Sloane}
%A.R.~Calderbank, N.J.A. Sloane, Modular and $p$-adic cyclic codes,
%Designs, Codes and Cryptography, 6 (1995), 21-35.

%\bibitem{Curtis-Reiner-Book} C.W. Curtis and I. Reiner, 
%``Representation theory of finite groups and associative algebras,'' 
%John Wiley and Sons, Inc., New York-London, 1962.

\bibitem{Z4-B-S-B-M}
A. Bonnecaze, P. Sol\'e, C. Bachoc and B. Mourrain,
{Type II codes over $\ZZ_4$},
IEEE Trans. Inform. Theory,
43 (1997), 969--976.

\bibitem{C-S-Z4} J.H. Conway and N.J.A. Sloane,
{Self-dual codes over the integers modulo 4,}
{J. Combin. Theory Ser.~A},
{62} (1993), 30--45.

\bibitem{Gaborit} P. Gaborit, Mass formulas for self-dual codes over $\ZZ_4$ 
and $\mathbf{F}_q + u\mathbf{F}_q$
rings, IEEE Trans. Inform. Theory, 42 (1996), 1222--1228.

\bibitem{HSG}
M. Harada, P. Sol\'e and P.Gaborit,
Self-dual codes over $\mathbf{Z}_4$ and unimodular lattices: a survey,
Algebras and combinatorics (Hong Kong, 1997),
255--275, Springer, Singapore,  1999.

%\bibitem{Huffman-Pless-Book} W.C. Huffman and V. Pless, 
%``Fundamentals of error-correcting codes,'' 
%Cambridge University Press, Cambridge, 2003. 

\bibitem{iso}
V.S. Pless, The number of isotropic subspaces in a finite geometry,
{Atti. Accad. Naz. Lincei Rendic},
{39} (1965), 418-421.

\bibitem{Golay}
V.S. Pless, On the uniqueness of the Golay codes, J. Combin.
Theory, {5} (1968), 215-228. 

\bibitem{Rains-Sloane}E.M. Rains and N.J.A. Sloane,
Self-Dual Codes, in {``Handbook of Coding Theory,''} 
V.S.\ Pless and W.C.\ Huffman,  eds., 
Elsevier, Amsterdam, 
(1998),  pp.\ 177--294.

\bibitem{Chaudhuri} D.K. Ray-Chaudhuri, Some results on quadrics in finite 
projective geometry based on Galois fields, Canad. J. Math., 14 (1962), 129--138.

\end{thebibliography}
\end{document}